\newtheorem{theorem}{Theorem}    %[section]
\newtheorem{lemma}[theorem]{Lemma}
\newtheorem{remark}[theorem]{Remark}
\newtheorem{definition}[theorem]{Definition}
\theoremstyle{definition}
\numberwithin{theorem}{section} \numberwithin{theorem}{section}
\numberwithin{equation}{section}
\def\Rn{{\mathbb{R}^n}}
\def\supp{\operatorname{supp}}
\def\esss{\operatornamewithlimits{ess\,sup}}
\begin{document}
\title[Variational characterizations of Hardy spaces and $BMO$ spaces]
{Variational characterizations of weighted Hardy spaces and weighted $BMO$ spaces}
%title of paper and the running head option

\author{Weichao Guo, Yongming Wen, Huoxiong Wu$^*$ and Dongyong Yang}
%%%%%%%%%%%%%%% footnote %%%%%%%%%%%%%%%%

\subjclass[2010]{%2000 MSC numbers
42B25; 42B30; 42B35; 47B47.
}
%In case \subjclass[2000] command is not effective
%(or the version of amsart.cls is old), write as follows instead:
%\renewcommand{\thefootnote}{\fnsymbol{footnote}}
%\footnote[0]{2000\textit{ Mathematics Subject Classification}.

%Primary 00; Secondary 00.}
%
\keywords{variation operators, approximate identities, weighted Hardy spaces, weighted $BMO(\Rn)$ spaces.}
\thanks{$^*$Corresponding author.}
\thanks{Supported by the NNSF of China (Nos. 11771358, 11871101, 11971402, 11871254, 11701112).}%, 11671414).}
%%%%%%%%%%%% Authors addresses %%%%%%%%%%%%%
\address{School of Science, Jimei University, Xiamen 361021, China} \email{weichaoguomath@gmail.com}
\address{School of Mathematics and Statistics, Minnan Normal University, Zhangzhou 363000,  China} \email{wenyongmingxmu@163.com}
\address{School of Mathematical Sciences, Xiamen University, Xiamen 361005, China} \email{huoxwu@xmu.edu.cn}
\address{School of Mathematical Sciences, Xiamen University, Xiamen 361005, China} \email{dyyang@xmu.edu.cn}

%\address{}
%\email{}

%%%%%%%%%%%%%%%%%%%%%%%%%%%%%%%%%%%%%%%%%

\begin{abstract}
This paper obtains new characterizations of weighted Hardy spaces and certain weighted $BMO$ type spaces via the boundedness of variation operators associated with approximate identities and their commutators, respectively.
\end{abstract}

\maketitle

\section{Introduction and main results}
Variational inequalities originated from the intention of improving the well-known Doob maximal inequality. Relied upon the work of L\'{e}pingle \cite{Le}, Bourgain \cite{Bo} obtained the corresponding variational estimates for the Birkhoff ergodic averages and pointwise convergence results. This work has set up a new research subject in harmonic analysis and ergodic theory. Afterwards, the study of variational inequalities has been spilled over into harmonic analysis, probability and ergodic theory. Particularly, the classical work of $\rho$-variation operators for singular integrals was given in \cite{CaJRW1}, in which the authors obtained the $L^p$-bounds and weak type (1,1) bounds for $\rho$-variation operators of truncated Hilbert transform if $\rho>2$, and then extended to higher dimensional cases in \cite{CaJRW2}. For further studies, we refer readers to \cite{CheDHL,DHL,GT,MaTX1,MaTX2,WWZ} etc. for variation operators of singular integrals with rough kernels and weighted cases, \cite{BetFHR,CheDHL2,HLP,LW,WGW,WWZ} etc. for variation operators of commutators.

Here, we will focus on the variation operators associated with approximate identities. For the special case, the variation operators associated with heat and Poisson semigroups, Jones et al. \cite{JR} and Crescimbeni et al. \cite{CrMacMTV} independently established the $L^p$-bounds and weak type $(1,1)$ bounds in the different approach. Recently, Liu \cite{Liu} generalized the results in \cite{CrMacMTV, JR} to the variation operators associated to approximate identities and obtained a variational characterization of Hardy spaces. In this paper, one of our main purposes is to extend the results in \cite{Liu} to the weighted cases, and give a new characterization of weighted Hardy spaces via variation inequalities associated with approximate identities. Meanwhile, we will also consider the weighted variation inequalities associated with commutators of approximate identities and aim to provide new characterizations of certain weighted $BMO$ spaces. Before stating our results, we first recall some relevant notation and definitions.

Given a family of complex numbers $\mathfrak{a}:=\{a_t\}_{t\in I}$ with $I\subset (0, +\infty)$. For $\rho>1$, the $\rho$-variation of $\mathfrak{a}$ is given by
\begin{align*}
\|\mathfrak{a}\|_{\mathcal{V}_\rho}:=\sup\Big(\sum_{k\geq1}|a_{t_k}-a_{t_{k+1}}|^\rho
\Big)^{1/\rho},
\end{align*}
where the supremum is taken over all finite decreasing sequences $\{t_k\}$ in $I$. From the definition of $\rho$-variation,  it is easy to check that
\begin{equation}\label{eq 1.2}
\sup_{t\in I}|a_t|\leq|a_{t_0}|+\|\mathfrak{a}\|_{\mathcal{V}_\rho}
\end{equation}
holds for arbitrary $t_0\in I$.

Let $\mathcal{F}:=\{F_t\}_{t>0}$ be a family of operators. Then the $\rho$-variation of the family $\mathcal{F}$ is defined as
\begin{align*}
\mathcal{V}_\rho(\mathcal{F}f)(x):=\|\{F_tf(x)\}_{t>0}\|_{\mathcal{V}_\rho}.
\end{align*}
In particular, let $\phi\in\mathcal{S}(\Rn)$ satisfying $\int_{\Rn}\phi(x)dx=1$, where $\mathcal{S}(\Rn)$ is the space of Schwartz functions. We consider the following family of operators
\begin{equation}\label{eq 1.3}
\Phi\star f(x):=\{\phi_t\ast f(x)\}_{t>0},
\end{equation}
where $\phi_t(x):=t^{-n}\phi(x/t)$.
Then the $\rho$-variation of families $\Phi\star f$ is defined by
\begin{align}\label{simple1}
\mathcal{V}_\rho(\Phi\star f)(x)=\sup_{\{t_k\}\downarrow0}\Big(\sum_{k\geq1}
|\phi_{t_{k}}\ast f(x)-\phi_{t_{k+1}}\ast f(x)|^\rho\Big)^{1/\rho}.
\end{align}
And for $b\in L^1_{\rm loc}(\mathbb{R}^n)$ and $\phi$ being as above, we define $(\Phi\star f)_b$, the family of commutators associated with approximate identities, by
\begin{equation*}
(\Phi\star f)_b(x):=\{b(x)(\phi_t\ast f)(x)-\phi_t\ast (bf)(x)\}_{t>0},
\end{equation*}
where
$$b(x)(\phi_t\ast f)(x)-\phi_t\ast (bf)(x):=\int_{\Rn}\frac{1}{t^n}\phi\Big(\frac{x-y}{t}\Big)(b(x)-b(y))f(y)dy,$$
and the corresponding $\rho$-variation operator by
\begin{align}\label{simple2}
\mathcal{V}_\rho((\Phi\star f)_b)(x)&:=\sup_{\{t_k\}\downarrow0}\Big(\sum_{k\geq1}
\Big|b(x)(\phi_{t_k}\ast f)(x)-\phi_{t_k}\ast (bf)(x)\\
&\qquad\qquad-b(x)(\phi_{t_{k+1}}\ast f)(x)+\phi_{t_{k+1}}\ast (bf)(x)
\Big|^\rho\Big)^{1/\rho}\nonumber.
\end{align}

We now recall some revelent facts about weighted Hardy spaces; see, for example, \cite{Gar,ST}.
Let $\phi\in\mathcal{S}(\mathbb{R}^n)$ with $\int_{\mathbb{R}^n}\phi(x)dx=1$. For $f\in\mathcal{S}'(\mathbb{R}^n)$, define the maximal function $M_\phi$ by
\begin{align}\label{eq1.5}
M_\phi f(x):=\sup_{t>0}|f\ast\phi_t(x)|.
\end{align}
Then for $\omega\in A_\infty$, the Muckenhoupt class, and $0<p<\infty$, define the weighted Hardy spaces $H^p(\omega)$ by
\begin{align*}
H^p(\omega):=\{f\in\mathcal{S}'(\mathbb{R}^n):M_\phi f\in L^p(\omega)\}
\end{align*}
with the quasi-norm $\|f\|_{H^p(\omega)}:=\|M_\phi f\|_{L^p(\omega)}$. When $\omega\equiv 1$, we denote $H^p(\omega)$ by $H^p(\mathbb{R}^n)$. It is well-known that the space $H^p(\omega)$ is independent of the choice of $\phi$ and $H^p(\omega)=L^p(\omega)$ for $p>1$ and $\omega\in A_p$.

Now we formulate our first main result, which is the weighted version of the corresponding result in \cite{Liu}, as follows.

\begin{theorem}\label{theorem1.1}
Let $\phi\in \mathcal S(\mathbb R^n)$ with $\int_{\Rn}\phi(x)dx=1$, $\rho>2$. Then,

\medskip

{\rm (i)}\, when $n/(n+1)<p\leq1$ and $\omega\in A_{p(n+1)/n}$, for
$f\in\mathcal{S}'(\mathbb{R}^n)$ and any $t>0$, $f\in H^p(\omega)$ if and only if $\phi_t\ast f\in L^p(\omega)$ and $\mathcal{V}_\rho(\Phi\star f)\in L^p(\omega)$, with
\begin{align*}
\|\phi_t\ast f\|_{L^p(\omega)}+\|\mathcal{V}_\rho(\Phi\star f)\|_{L^p(\omega)}
\lesssim \|f\|_{H^p(\omega)}
\lesssim\|\phi_t\ast f\|_{L^p(\omega)}+\|\mathcal{V}_\rho(\Phi\star f)\|_{L^p(\omega)}\nonumber;
\end{align*}

\medskip

{\rm (ii)}\, when $1<p<\infty$ and $\omega\in A_p$,
\begin{align}\label{eq1.6}
\|\mathcal{V}_\rho(\Phi\star f)\|_{L^p(\omega)}\lesssim[\omega]_{A_p}^
{\max\{1,\frac{1}{p-1}\}}\|f\|_{L^p(\omega)},\qquad \forall\, f\in L^p(\omega);
\end{align}
and for any $t>0$, a function $f\in H^p(\omega)$ if and only if $\phi_t\ast f\in L^p(\omega)$ and $\mathcal{V}_\rho(\Phi\star f)\in L^p(\omega)$, with
\begin{align*}
\|\phi_t\ast f\|_{L^p(\omega)}+\|\mathcal{V}_\rho(\Phi\star f)\|_{L^p(\omega)}
\lesssim \|f\|_{H^p(\omega)}\leq \|\phi_t\ast f\|_{L^p(\omega)}+\|\mathcal{V}_\rho(\Phi\star f)\|_{L^p(\omega)}.
\end{align*}

Here the implicit constants are independent of $f$ and $t$.
\end{theorem}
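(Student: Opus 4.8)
The plan is to isolate, in each part, the single nontrivial estimate and dispatch the rest by soft arguments. Specializing \eqref{eq 1.2} to $a_t=\phi_t\ast f(x)$ gives the pointwise domination $M_\phi f(x)\le |\phi_{t_0}\ast f(x)|+\mathcal V_\rho(\Phi\star f)(x)$ for every fixed $t_0>0$. Taking $L^p(\omega)$ quasi-norms and using the triangle inequality (for $p>1$) or the $p$-subadditivity of $\|\cdot\|_{L^p(\omega)}^p$ (for $p\le1$) immediately yields the lower bounds for $\|f\|_{H^p(\omega)}$ in both (i) and (ii), together with the ``$\Leftarrow$'' implications. Conversely $|\phi_t\ast f|\le M_\phi f$ pointwise, so $\|\phi_t\ast f\|_{L^p(\omega)}\le\|f\|_{H^p(\omega)}$ always. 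Hence in every case the real content is the single estimate
\[
\|\mathcal V_\rho(\Phi\star f)\|_{L^p(\omega)}\lesssim \|f\|_{H^p(\omega)}.
\]

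For part (ii) I would first establish \eqref{eq1.6}. Viewing $f\mapsto\{\phi_t\ast f\}_{t>0}$ as a vector-valued operator with values in the $\rho$-variation space $\mathcal V_\rho$, the kernel $K(x,y)=\{\phi_t(x-y)\}_{t>0}$ satisfies, since $\phi\in\mathcal S(\mathbb{R}^n)$, the size bound $\|K(x,y)\|_{\mathcal V_\rho}\lesssim|x-y|^{-n}$ and the H\"ormander smoothness $\|K(x,y)-K(x,y')\|_{\mathcal V_\rho}\lesssim|y-y'|\,|x-y|^{-n-1}$ for $|x-y|\ge2|y-y'|$. Combined with the (unweighted) $L^2$-boundedness of the variation operator for Schwartz approximate identities, this places the operator within the scope of sparse domination, and a Lerner-type local-oscillation argument gives $\mathcal V_\rho(\Phi\star f)\lesssim\mathcal A_{\mathcal S}f$ for a sparse family $\mathcal S$. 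As sparse operators obey the sharp weighted inequality with constant $[\omega]_{A_p}^{\max\{1,1/(p-1)\}}$, estimate \eqref{eq1.6} follows; the characterization in (ii) is then immediate from \eqref{eq1.6}, the identity $H^p(\omega)=L^p(\omega)$, and the two elementary inequalities above.

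The heart of the matter is the estimate $\|\mathcal V_\rho(\Phi\star f)\|_{L^p(\omega)}\lesssim\|f\|_{H^p(\omega)}$ for $n/(n+1)<p\le1$ in part (i), which I would handle by the weighted atomic decomposition $f=\sum_j\lambda_j a_j$, with $(p,\omega)$-atoms supported on cubes $Q_j=Q(x_j,r_j)$ satisfying $\|a_j\|_\infty\le\omega(Q_j)^{-1/p}$, $\int a_j=0$, and $\sum_j|\lambda_j|^p\lesssim\|f\|_{H^p(\omega)}^p$. The $p$-subadditivity of $\mathcal V_\rho$ reduces everything to the uniform bound $\|\mathcal V_\rho(\Phi\star a)\|_{L^p(\omega)}^p\lesssim1$ for a single atom $a$. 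On $2Q$, H\"older's inequality with exponent $q_0/p$, where $q_0=p(n+1)/n>1$, followed by the $L^{q_0}(\omega)$-boundedness provided by the proof of \eqref{eq1.6}, the normalization $\|a\|_\infty\le\omega(Q)^{-1/p}$, and the doubling of $\omega\in A_\infty$, gives a bounded contribution. Off $2Q$ I would use the cancellation $\int a=0$ together with $\partial_t\phi_t(z)=-t^{-1}\psi_t(z)$, where $\psi(w)=n\phi(w)+w\cdot\nabla\phi(w)\in\mathcal S(\mathbb{R}^n)$, to bound $\mathcal V_\rho\le\mathcal V_1=\int_0^\infty|\partial_t(\phi_t\ast a)|\,dt$ and derive the pointwise decay $\mathcal V_\rho(\Phi\star a)(x)\lesssim r^{n+1}\omega(Q)^{-1/p}|x-x_j|^{-(n+1)}$.

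Summing the resulting tail $\int_{(2Q)^c}|x-x_j|^{-(n+1)p}\omega(x)\,dx$ over dyadic annuli $2^{k+1}Q\setminus2^kQ$ and invoking the $A_q$-growth $\omega(2^{k+1}Q)\lesssim2^{(k+1)nq}\omega(Q)$ produces a geometric series with ratio $2^{nq-(n+1)p}$. At the endpoint $q=p(n+1)/n$ this series only just diverges, and this is the main obstacle. It is resolved by the openness (self-improvement) of the Muckenhoupt classes, which upgrades $\omega\in A_{p(n+1)/n}$ to $\omega\in A_{q'}$ for some $q'<p(n+1)/n$, so that $nq'<(n+1)p$ and the series converges, yielding the desired bound $1$. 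The same self-improvement guarantees that atoms with the single cancellation $\int a=0$ suffice for the decomposition, since then the required moment order $\lfloor n(q'/p-1)\rfloor=0$. The remaining ``$\Leftarrow$'' direction and the control of $\|\phi_t\ast f\|_{L^p(\omega)}$ follow from the elementary inequalities recorded at the outset.
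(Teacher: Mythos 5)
Your overall strategy coincides with the paper's: part (ii) via sparse domination (the paper proves weak $(1,1)$ for the grand maximal truncation operator $\mathcal{M}_{\mathcal{V}_\rho(\Phi)}$ using exactly the kernel smoothness estimate you state, then invokes Lerner's criterion and the sharp weighted bounds \eqref{eq 2.1} for sparse operators, which is Theorem \ref{th2.1}); part (i) via uniform estimates on atoms, split into a near part (H\"older plus the weighted $L^{q_0}$ bound) and a far part (cancellation plus the decay $|x-x_0|^{-(n+1)}$, summed over annuli using $\omega\in A_{q_0}$ for some $q_0<p(n+1)/n$, i.e. the openness of the $A_p$ classes); and the converse directions via the elementary inequality \eqref{eq 1.2}. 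All of these steps are correct and match the paper's proof in substance.

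The one genuine gap is the sentence ``The $p$-subadditivity of $\mathcal{V}_\rho$ reduces everything to the uniform bound'' on a single atom. For $0<p\le 1$ this reduction is precisely the step that is known to fail for general sublinear operators: there exist operators uniformly bounded on atoms that do not extend boundedly from $H^p$ to $L^p$, which is why Bownik, Li, Yang and Zhou proved the criterion the paper records as Lemma \ref{lm3.2} and why the paper's proof of \eqref{e-var bdd Hp} runs through that lemma rather than through an infinite atomic decomposition. The atomic series $f=\sum_j\lambda_j a_j$ converges only in $\mathcal{S}'(\mathbb{R}^n)$ (and in $H^p(\omega)$), so the pointwise inequality $\mathcal{V}_\rho(\Phi\star f)\le\sum_j|\lambda_j|\,\mathcal{V}_\rho(\Phi\star a_j)$ needs justification; one cannot simply pass a sublinear operator through an infinite sum of distributions. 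For this particular operator the step can be repaired: for fixed $x$ and $t$ one has $\phi_t\ast f(x)=\lim_{N\to\infty}\sum_{j\le N}\lambda_j\,\phi_t\ast a_j(x)$, since this is the pairing of $f\in\mathcal{S}'$ with $\phi_t(x-\cdot)\in\mathcal{S}$, and finite $\ell^\rho$ norms of the differences then pass to the limit, giving the term-wise domination. Alternatively, and more cleanly, one invokes Lemma \ref{lm3.2} as the paper does, using the $L^{q_0}(\omega)$-boundedness supplied by Theorem \ref{th2.1}. Either way this justification must be supplied; as written, your proof asserts exactly the step whose failure in general is the reason the BLYZ criterion exists.
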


\medskip

On the other hand, recalling the commutators
$$[b,T]f(x):=b(x)Tf(x)-T(bf)(x),$$
where $b$ is a given locally integral function and $T$ is a linear operator, Coifman, Rochberg and Weiss \cite{CRW} showed that $b\in BMO(\mathbb{R}^n)$ if and only if $[b,R_j]$ is bounded on $L^p(\mathbb{R}^n)$ for $1<p<\infty$, where $R_j$ is the $j$-th Riesz transform on $\mathbb{R}^n$, $j=1,\,\cdots,n$. For $n=1$, Bloom \cite{Bl} extended the above result to the following weighted case:
$$\|[b,H]\|_{L^p(\mu)\to L^p(\lambda)}\simeq \|b\|_{ BMO_\nu(\mathbb R)}\, \quad \text{for}\,\, 1<p<\infty,\,\mu,\,\lambda\in A_p,$$
where $H$ is the Hilbert transform, $\nu=(\mu/\lambda)^{1/p}$, and $BMO_\nu(\mathbb R^n)$ is the following weighted $BMO$ space defined by
$${ BMO}_\nu(\mathbb R^n):=\{b\in L^1_{\rm loc}(\mathbb{R}^n):\,\, \|b\|_{BMO_\nu(\mathbb R^n)}:=\sup_{B\subset\mathbb{R}^n}\frac1{\nu(B)}\int_B|b(y)-\langle b\rangle_B|dy<\infty\}.$$
Here the supremum is taken over all balls, $\langle b\rangle_B:=|B|^{-1}\int_B b(y)dy$. Subsequently, a lot of attentions have been paid to this topic. We refer to \cite{HLW,HRS,LOR1} and therein references for recent works.

In addition, it is well-know that when $b\in BMO(\mathbb{R}^n)$, for singular integral $T$, $[b,T]$ may not map $H^1(\mathbb{R}^n)$ boundedly into $L^1(\mathbb{R}^n)$ (see \cite[Remark]{Pa}). To investigate the $H^1-L^1$ bound of $[b, T]$, Liang, Ky and Yang \cite{LKY} introduced the following weighted BMO type space
$$\mathcal{BMO}_\omega(\mathbb{R}^n):=\{b\in L^1_{\rm loc}(\mathbb{R}^n):\, \|b\|_{\mathcal{BMO}_\omega(\mathbb{R}^n)}<\infty\},$$
where
$$\|b\|_{\mathcal{BMO}_\omega(\mathbb{R}^n)}:=\sup_{B\subset\mathbb{R}^n}\frac 1{\omega(B)}\int_{B^c}\frac{\omega(x)}{|x-x_B|^n}dx\int_B|b(y)-\langle b\rangle_B|dy$$
for $\omega\in A_\infty$ and $\int_{\mathbb{R}^n}\frac{\omega(x)}{1+|x|^n}dx<\infty$, the supremum is taken over all balls $B:=B(x_B,r)$. It is clear that $\mathcal{BMO}_\omega(\mathbb{R}^n)\subsetneq BMO(\mathbb{R}^n)$ (see \cite{LKY}). And in \cite{LKY}, the authors showed that for $\delta$-Calder\'on-Zygmund operator $T$ and $\omega\in A_{(n+\delta)/n}$, $b\in \mathcal{BMO}_\omega(\mathbb{R}^n)$ if and only if $[b,T]$ is bounded from $H^1(\omega)$ to $L^1(\omega)$.

Inspired by the above results, it is natural to ask whether the corresponding characterizations can be given via variation operators of commutators associated with approximate identities. Our next theorems will give a confirmative answer to this question.

\begin{theorem}\label{new theorem1}
Let $\phi\in \mathcal S(\mathbb R^n)$ with $\int_{\Rn}\phi(x)dx=1$, $1<p<\infty$, $\mu,\lambda\in A_{p}$ and $\nu=(\mu\lambda^{-1})^{1/p}$. Then for $\rho>2$, the following statements are equivalent:
\begin{itemize}
\item [(1)]$\|\mathcal{V}_\rho((\Phi\star f)_b)\|_{L^p(\lambda)}\lesssim
([\mu]_{A_p}[\lambda]_{A_p})^{\max\{1,\frac{1}{p-1}\}}\|f\|_{L^p(\mu)}$ ;
\item [(2)]$b\in BMO_\nu(\mathbb{R}^n)$.
  \end{itemize}
\end{theorem}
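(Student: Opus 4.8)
The plan is to prove the two implications separately, following the two-weight (Bloom) paradigm for commutators; throughout $\langle h\rangle_Q:=|Q|^{-1}\int_Q h$ as in the introduction, and all cubes are taken from a fixed dyadic lattice.

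\emph{Sufficiency, $(2)\Rightarrow(1)$.} My approach is pointwise sparse domination. Starting from the quantitative weighted $L^p$ bounds for $\mathcal{V}_\rho(\Phi\star\cdot)$ in Theorem~\ref{theorem1.1} (together with its weak type $(1,1)$ estimate), Lerner's sparse domination principle produces, for each compactly supported $g$, a sparse collection $\mathfrak{S}$ with
\begin{align*}
\mathcal{V}_\rho(\Phi\star g)(x)\lesssim\sum_{Q\in\mathfrak{S}}\langle|g|\rangle_Q\,\one_Q(x).
\end{align*}
Feeding the algebraic splitting $b(x)-b(y)=(b(x)-\langle b\rangle_Q)+(\langle b\rangle_Q-b(y))$ into the commutator kernel and rerunning the stopping-time argument localized to each $Q$, I would upgrade this to a sparse domination of the commutator variation,
\begin{align*}
\mathcal{V}_\rho((\Phi\star f)_b)(x)\lesssim\sum_{Q\in\mathfrak{S}}\Big(|b(x)-\langle b\rangle_Q|\,\langle|f|\rangle_Q+\big\langle\,|b-\langle b\rangle_Q|\,|f|\,\big\rangle_Q\Big)\one_Q(x).
\end{align*}
The right-hand side is a sum of sparse commutator forms, and the quantitative two-weight (Bloom) estimates for such forms, as in \cite{HLW,LOR1}, give precisely $\|\cdot\|_{L^p(\lambda)}\lesssim\|b\|_{BMO_\nu}\,([\mu]_{A_p}[\lambda]_{A_p})^{\max\{1,\frac1{p-1}\}}\|f\|_{L^p(\mu)}$, which is $(1)$.

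\emph{Necessity, $(1)\Rightarrow(2)$.} The key observation is that the variation dominates every two-scale difference. Setting $\theta:=\phi-\phi_2$, so that $\theta\in\mathcal S(\Rn)$ and $\int_{\Rn}\theta=0$, and $\theta_t:=\phi_t-\phi_{2t}$, the two-term decreasing sequence $\{2t>t\}$ in the definition of $\mathcal{V}_\rho$ yields, for every $t>0$,
\begin{align*}
\Big|\int_{\Rn}\theta_t(x-y)(b(x)-b(y))f(y)\,dy\Big|\le\mathcal{V}_\rho((\Phi\star f)_b)(x).
\end{align*}
The integral on the left defines a commutator $C_tf(x)$ with the mean-zero kernel $\theta_t$, so by $(1)$ the operators $C_t$ are bounded from $L^p(\mu)$ to $L^p(\lambda)$ uniformly in $t$. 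Since $\theta_t\ast$ is convolution with a smooth, rapidly decaying, mean-zero kernel, $C_t$ behaves like a Calder\'on--Zygmund commutator, and I would run the standard Bloom testing argument: fixing a ball $B=B(x_B,r)$, choosing the scale $t\sim r$ and a test function adapted to $\operatorname{sgn}(b-\langle b\rangle_B)\one_B$, the cancellation $\int_{\Rn}\theta=0$ converts a lower bound for $\|C_tf\|_{L^p(\lambda)}/\|f\|_{L^p(\mu)}$ into $\frac1{\nu(B)}\int_B|b(y)-\langle b\rangle_B|\,dy\lesssim\|C_t\|_{L^p(\mu)\to L^p(\lambda)}$. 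Because $\mathcal{V}_\rho((\Phi\star\cdot)_b)$ controls $C_t$ for \emph{all} $t$, varying $t$ across scales supplies the frequency non-degeneracy needed here (recall $\widehat{\theta_t}(\xi)=\widehat\theta(t\xi)$ and $\widehat\theta\not\equiv0$); taking the supremum over $B$ then gives $b\in BMO_\nu(\Rn)$.

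\emph{The main obstacle.} On the sufficiency side the crux is the sparse domination of the \emph{variation} operator of the commutator: since $\mathcal{V}_\rho$ is only sublinear and its ``kernel'' carries no pointwise Calder\'on--Zygmund regularity, the stopping-time/oscillation argument must be driven entirely by the quantitative weighted bounds of Theorem~\ref{theorem1.1} rather than by kernel estimates, and the splitting of $b(x)-b(y)$ has to be carried through the inner $\ell^\rho$-sum without destroying the $\rho>2$ structure. On the necessity side the delicate point is that $\phi$ is an \emph{arbitrary} Schwartz function with $\int_{\Rn}\phi=1$ (possibly sign-changing, possibly with $\phi(0)=0$): a single scale may be degenerate, which is exactly why one passes to the mean-zero kernel $\theta$ and must exploit the whole family $\{C_t\}_{t>0}$, together with $\widehat\theta\not\equiv0$, to keep the test-function lower bound from collapsing.
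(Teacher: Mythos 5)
Your sufficiency direction $(2)\Rightarrow(1)$ is essentially the paper's proof: the paper likewise obtains, from the weak type $(1,1)$ of the grand maximal truncated operator (Lemma \ref{lm2.4}) and the scheme of \cite{LOR}, the pointwise domination of $\mathcal{V}_\rho((\Phi\star f)_b)$ by the sparse commutator forms $\mathcal{T}_{\mathcal{S},b}$ and $\mathcal{T}_{\mathcal{S},b}^\ast$, and then quotes the quantitative two-weight (Bloom) bounds for these forms from \cite{LOR}. One correction to your ``main obstacle'' commentary: the sparse domination in the paper is \emph{not} driven by the weighted bounds of Theorem \ref{theorem1.1}, but by a genuine kernel estimate --- the $\mathcal{V}_\rho$-norm of kernel differences obeys the Calder\'on--Zygmund-type bound \eqref{eq2.5}, and this is exactly what makes $\mathcal{M}_{\mathcal{V}_\rho(\Phi)}$ weak $(1,1)$.

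The necessity direction $(1)\Rightarrow(2)$ contains a genuine gap. Your extraction step is fine: the two-term sequence $\{2t,t\}$ shows that the commutators $C_t$ with kernel $\theta_t$, $\theta:=\phi-\phi_2$, are uniformly bounded from $L^p(\mu)$ to $L^p(\lambda)$. The gap is the lower bound $\sup_t\|C_t\|_{L^p(\mu)\to L^p(\lambda)}\gtrsim\|b\|_{BMO_\nu(\Rn)}$, which you justify by ``the standard Bloom testing argument'' plus the frequency non-degeneracy $\widehat\theta\not\equiv0$. Fourier non-degeneracy is not what testing arguments consume; what is needed is \emph{pointwise, signed} non-degeneracy: a ball $B(w_0,\delta)$ with $w_0\neq 0$ on which the kernel has fixed sign and size $\gtrsim1$, so that for $x$ in a cube $Q$ and $y$ in a suitably \emph{shifted} cube $P$ one has $|\theta_t(x-y)|\gtrsim|Q|^{-1}$ with a single sign at the scale $t\sim l_Q$. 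One must also control the sign of $b(x)-b(y)$, which requires the median-value/local mean oscillation machinery of \cite{LOR1} (sets $E\subset Q$, $F\subset P$ on which $b(x)-b(y)$ is single-signed and $\geq a_\tau(b;Q)$), not the test function $\operatorname{sgn}(b-\langle b\rangle_B)\chi_B$: with a mean-zero, sign-changing kernel $\theta_t$ and $x,y$ ranging over the same ball, that test function yields no lower bound at all. Indeed your parenthetical claim that the cancellation $\int_{\Rn}\theta=0$ ``converts'' the operator bound into the $BMO_\nu$ bound is backwards --- kernel cancellation is an obstacle here, not a tool. The paper avoids it entirely: since $\int_{\Rn}\phi=1$, $\phi$ itself is bounded below on some ball $B(z_0,\delta)$; one shifts $P:=Q-10\sqrt n\,\delta^{-1}l_Q z_0$ and takes in the variation the pair of scales $\{10\sqrt n\,\delta^{-1}l_Q,\,t\}$ with $t\to0$. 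Because $|x-y|\gtrsim l_Q$ on $E\times F$, the $t$-term vanishes in the limit, so the variation dominates the \emph{single-scale, positive-kernel} operator $\phi_{10\sqrt n\delta^{-1}l_Q}\ast$, and the median-value argument combined with \eqref{eq4.1}, $\mu=\nu^p\lambda$ and $\lambda\in A_p$ gives $b\in BMO_\nu(\Rn)$. Your route could in principle be repaired (a continuous $\theta\not\equiv0$ does have a signed ball $B(w_0,\delta)$ with $w_0\neq0$), but those repairs --- the signed pointwise lower bound, the shifted-cube geometry, and the median/oscillation argument --- are precisely the missing core of the proof.
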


\begin{theorem}\label{new theorem} Let $\phi\in \mathcal S(\mathbb R^n)$ with $\int_{\Rn}\phi(x)dx=1$, $\rho>2$.
Assume that $b\in BMO(\mathbb{R}^n)$, $\omega\in A_{1}$ with $\int_{\mathbb{R}^n}\frac{\omega(x)}{1+|x|^n}dx<\infty$. Then the following statements are equivalent:
\begin{itemize}
\item [(1)]$\mathcal{V}_\rho((\Phi\star f)_b)$ is bounded from $H^1(\omega)$ to $L^{1}(\omega)$.
\item [(2)]$b\in \mathcal{BMO}_\omega(\mathbb{R}^n)$.
  \end{itemize}
\end{theorem}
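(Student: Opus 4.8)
The plan is to establish the two implications separately, using an atomic decomposition for the direction (2)$\Rightarrow$(1) and a carefully chosen test atom for the converse. For (2)$\Rightarrow$(1), since $\omega\in A_1$ the space $H^1(\omega)$ decomposes into $(\omega,\infty)$-atoms, so by standard arguments it suffices to bound $\|\mathcal{V}_\rho((\Phi\star a)_b)\|_{L^1(\omega)}$ uniformly over atoms $a$ with $\supp a\subset B=B(x_B,r)$, $\int a=0$ and $\|a\|_{L^\infty}\le\omega(B)^{-1}$. I would split the integral over $2B$ and over $(2B)^c$. On $2B$ I would take $\mu=\lambda=\omega$ and $\nu\equiv 1$ in Theorem~\ref{new theorem1}: since $b\in BMO(\mathbb{R}^n)=BMO_1(\mathbb{R}^n)$, this gives the $L^q(\omega)$-boundedness of $\mathcal{V}_\rho((\Phi\star\,\cdot\,)_b)$ for a fixed $q\in(1,\infty)$ (recall $\omega\in A_1\subset A_q$); H\"older's inequality together with $\|a\|_{L^q(\omega)}\le\omega(B)^{1/q-1}$ and the doubling $\omega(2B)\lesssim\omega(B)$ then bounds the local part by an absolute constant.

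For the global part I would first record a pointwise estimate. Since $\rho>2>1$ the $\ell^\rho$-sum is dominated by the $\ell^1$-sum, so the variation is at most the total variation in $t$; writing $\psi:=n\phi+x\cdot\nabla\phi$, a Schwartz function with $\int\psi=0$, and using $\partial_t(\phi_t\ast g)=-t^{-1}\psi_t\ast g$ together with $b(x)-b(y)=(b(x)-\langle b\rangle_B)-(b(y)-\langle b\rangle_B)$ and $\int a=0$, I would obtain
\[
\mathcal{V}_\rho((\Phi\star a)_b)(x)\le|b(x)-\langle b\rangle_B|\int_0^\infty|\psi_t\ast a(x)|\,\frac{dt}{t}+\int_0^\infty|\psi_t\ast((b-\langle b\rangle_B)a)(x)|\,\frac{dt}{t}.
\]
For $x\in(2B)^c$, the Schwartz decay of $\psi$, the cancellation $\int a=0$ in the first integral, and $\|(b-\langle b\rangle_B)a\|_{L^1}\le\omega(B)^{-1}\int_B|b-\langle b\rangle_B|$ in the second yield
\[
\mathcal{V}_\rho((\Phi\star a)_b)(x)\lesssim|b(x)-\langle b\rangle_B|\,\frac{r\,\|a\|_{L^1}}{|x-x_B|^{n+1}}+\frac{1}{|x-x_B|^{n}}\,\frac{1}{\omega(B)}\int_B|b-\langle b\rangle_B|.
\]
Integrating the second term against $\omega$ over $(2B)^c$ reproduces precisely the defining supremand of $\|b\|_{\mathcal{BMO}_\omega(\mathbb{R}^n)}$, hence is controlled. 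For the first term I would decompose $(2B)^c$ into dyadic annuli, estimate $\int_{2^{j+1}B}|b-\langle b\rangle_B|\omega\lesssim(j+1)\|b\|_{BMO}\,\omega(2^{j+1}B)$ by the weighted John--Nirenberg inequality ($\omega\in A_1\subset A_\infty$), and sum using the quantitative doubling $\omega(2^{j+1}B)\lesssim [\omega]_{A_1}2^{(j+1)n}\omega(B)$ against the gain $2^{-j(n+1)}$; the series converges, which completes (1).

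For the converse (1)$\Rightarrow$(2), I would fix $B=B(x_B,r)$ and test against $a:=\omega(B)^{-1}\big(\mathrm{sgn}(b-\langle b\rangle_B)-c_B\big)\chi_B$, where $c_B:=|B|^{-1}\int_B\mathrm{sgn}(b-\langle b\rangle_B)$ makes $\int a=0$. Then $a$ is a fixed multiple of an $(\omega,\infty)$-atom, so $\|a\|_{H^1(\omega)}\lesssim1$, and since $\int_B(b-\langle b\rangle_B)=0$ one gets $\int_B(b-\langle b\rangle_B)a=\omega(B)^{-1}\int_B|b-\langle b\rangle_B|$. Writing $C_t(x)$ for the $t$-entry of $(\Phi\star a)_b(x)$, for $x\in(2B)^c$ one has $C_t(x)\to0$ as $t\to0$, so \eqref{eq 1.2} gives $\mathcal{V}_\rho((\Phi\star a)_b)(x)\ge\sup_{t>0}|C_t(x)|$. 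Using $C_t(x)=(b(x)-\langle b\rangle_B)\,\phi_t\ast a(x)-\phi_t\ast((b-\langle b\rangle_B)a)(x)$, I would select for each $x$ a scale $t^\ast\approx|x-x_B|$ extracting the second term, whose leading size is $|x-x_B|^{-n}\omega(B)^{-1}\int_B|b-\langle b\rangle_B|$, treating the first (``diagonal'') term as an error. Integrating this lower bound against $\omega$ over $(2B)^c$, bounding the left side by the assumed $H^1(\omega)\to L^1(\omega)$ estimate and absorbing the diagonal term on the right exactly as above, I would arrive at $\omega(B)^{-1}\int_{(2B)^c}|x-x_B|^{-n}\omega(x)\,dx\int_B|b-\langle b\rangle_B|\lesssim1$; the missing annulus $2B\setminus B$ contributes at most $\|b\|_{BMO}$, and taking the supremum over $B$ gives $b\in\mathcal{BMO}_\omega(\mathbb{R}^n)$.

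The step I expect to be the main obstacle is the lower bound in the converse. Two points require genuine care: first, choosing the scale $t^\ast$ so that the kernel $(t^\ast)^{-n}\phi\big(\tfrac{x-x_B}{t^\ast}\big)$ does not degenerate, where the normalization $\int\phi=1$ must be exploited to locate a non-vanishing scale for each relevant $x$; and second, showing that the diagonal term $(b(x)-\langle b\rangle_B)\phi_t\ast a(x)$, though not pointwise small, has finite $L^1(\omega)$-mass over $(2B)^c$, which is exactly what the standing hypotheses $b\in BMO(\mathbb{R}^n)$ and $\omega\in A_1$ (hence the quantitative doubling and weighted John--Nirenberg inequality) supply. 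The remaining ingredients—the atomic reduction, the kernel estimates for $\psi$, and the dyadic summation—are routine.
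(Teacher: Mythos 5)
Your direction $(2)\Rightarrow(1)$ is essentially sound, though it takes a different route from the paper: you handle the local part via Theorem \ref{new theorem1} with $\mu=\lambda=\omega$, $\nu\equiv1$, and the global part by a direct kernel estimate over annuli, whereas the paper splits $\mathcal{V}_\rho((\Phi\star a)_b)\le \mathcal{V}_\rho(\Phi\star((b-\langle b\rangle_B)a))+|b(\cdot)-\langle b\rangle_B|\,\mathcal{V}_\rho(\Phi\star a)$, controls the second term by Lemma \ref{new lemma3}, and reduces the first to showing $(b-\langle b\rangle_B)a\in H^1(\omega)$ with norm $\lesssim\|b\|_{\mathcal{BMO}_\omega(\mathbb{R}^n)}$ before invoking Theorem \ref{theorem1.1}. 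Both routes work; do, however, make the atomic reduction honest by citing a criterion such as Lemma \ref{lm3.2}, since uniform bounds on atoms alone do not automatically extend a sublinear operator to all of $H^1(\omega)$.

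The converse is where the proposal genuinely breaks down, precisely at the step you flag as the main obstacle: the scale selection. For a general $\phi\in\mathcal{S}(\mathbb{R}^n)$ with $\int_{\Rn}\phi(x)dx=1$ there need not exist, for every $x\in(2B)^c$, any scale at which the kernel is nondegenerate on $B$. Concretely, in $\mathbb{R}^2$ let $\phi\ge0$ be a smooth bump with $\int\phi=1$ supported in $B(e_1,\varepsilon)$ with $\varepsilon<1/2$. If $x-x_B=Re_2$ with $R\ge 2r$ and $y\in B=B(x_B,r)$, then $\phi_t(x-y)\neq0$ forces both $t\ge R/(2\varepsilon)>2r$ (from the $e_2$-component of $(x-y)/t$) and $t\le r/(1-\varepsilon)<2r$ (from the $e_1$-component), a contradiction; hence $\phi_t(x-y)=0$ for all $t>0$ and $y\in B$, so every entry $C_t(x)$ of the commutator family vanishes and $\mathcal{V}_\rho((\Phi\star a)_b)(x)=0$ on a whole cone of directions. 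Thus no pointwise lower bound $\sup_{t>0}|C_t(x)|\gtrsim |x-x_B|^{-n}\omega(B)^{-1}\int_B|b-\langle b\rangle_B|$ can hold a.e.\ on $(2B)^c$, while the $\mathcal{BMO}_\omega$ functional integrates $\omega(x)|x-x_B|^{-n}$ over all of $B^c$; your final integration step therefore collapses. (A repair that restricts to a cone where $\phi$ is bounded below and then uses the doubling of $\omega\in A_1$ annulus-by-annulus to recover the full integral is conceivable, but you neither state nor carry it out, and your stated plan of ``locating a non-vanishing scale for each relevant $x$'' is exactly what the counterexample rules out.) The paper's proof avoids any nondegeneracy of $\phi$: from assumption (1) and Lemma \ref{new lemma3} it deduces $\|\mathcal{V}_\rho(\Phi\star((b-\langle b\rangle_B)a))\|_{L^1(\omega)}\lesssim1$, hence, by Theorem \ref{theorem1.1}(i) together with $\phi_t\ast f\to f$ in $L^1(\omega)$, that $(b-\langle b\rangle_B)a\in H^1(\omega)$ with uniformly bounded norm; it then applies Lemma \ref{new lemma2} with an auxiliary $\tilde\phi$ equal to $1$ on $B(0,1)$, which is legitimate because $H^1(\omega)$ and its maximal characterization are independent of the choice of Schwartz function. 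This transfer from the given, possibly degenerate, $\phi$ to a nondegenerate $\tilde\phi$ through the $H^1(\omega)$ norm is the idea missing from your argument.
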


\begin{remark}\label{r-heat and poisn}
Consider the  heat semigroup $\mathcal{W}:=\{e^{t\Delta}\}_{t>0}$ and the Poisson semigroup $\mathcal{P}:=\{e^{-t\sqrt{-\Delta}}\}_{t>0}$
associated to $\Delta=\sum_{i=1}^{n}\frac{\partial^2}{\partial x_{i}^2}$.
Since the heat kernels $W_t(x):=(\pi t)^{-n/2}e^{-|x|^2/t}$  belongs to $\mathcal{S}(\Rn)$ and satisfies $\int_{\Rn}W_t(x)dx=1$, so Theorems \ref{theorem1.1}-\ref{new theorem} hold for the variation operators associated with $\mathcal{W}$ and their commutators. Similarly, the same conclusions are true for the the variation operators associated with $\mathcal{P}$ and their commutators.
\end{remark}

The rest of the paper is organized as follows. After providing the weighted estimates of variation operators in Section 2, we will prove Theorem \ref{theorem1.1} in Section 3. The proofs of Theorems \ref{new theorem1} and \ref{new theorem} will be given in Sections 4 and 5, respectively.

We end this section by making some conventions. Denote $f\lesssim g$, $f\thicksim g$ if $f\leq Cg$ and $f\lesssim g \lesssim f$, respectively.  For any ball $B:=B(x_0,r)\subset \mathbb{R}^n$, $\langle f\rangle_B$ means the mean value of $f$ over $B$, $\chi_B$ represents the characteristic function of $B$, $\int_B\omega(y)dy$ is denoted by $\omega(B)$. For $a\in\mathbb{R}$, $\lfloor a\rfloor$ is the largest integer no more than $a$.

\section{The weighted estimate of variation operators}
In this section, we establish the weighted estimate of variation operators associated with approximations to the identity, which is useful
in the proof of Theorem \ref{theorem1.1}. We begin with the following definition of $A_p$ weights.

A weight $\omega$ is a non-negative locally integrable function on $\mathbb{R}^n$. Let $1<p<\infty$.
We say that $\omega\in A_p$ if there exists a positive constant $C$ such that
\begin{align*}
[\omega]_{A_p}:=\sup_{Q}\Big(\frac{1}{|Q|}\int_{Q}\omega(y)dy\Big)\Big(\frac{1}{|Q|}
\int_{Q}\omega(y)^{1-p'}dy\Big)^{p-1}\leq C,
\end{align*}
where $1/p+1/p'=1$ and the supremum is taken over all cubes $Q\subset \mathbb{R}^n$. When $p=1$, we say that $\omega\in A_1$ if
\begin{align*}
[\omega]_{A_1}:=\Big\|\frac{M\omega}{\omega}\Big\|_{L^\infty(\Rn)}<\infty,
\end{align*}
where $M$ is the Hardy-Littlewood maximal operator.
For $\omega\in A_\infty:=\cup_{1\leq p<\infty}A_p$, the $A_\infty$ constant is given by
$$[\omega]_{A_\infty}:=\sup_{Q\subset \Rn}\frac{1}{\omega(Q)}\int_QM(\chi_Q\omega)(x)dx.$$

Now we recall the definitions of dyadic lattice, sparse family and sparse operator; see, for example,  \cite{Ler,LOR,Perey}. Given a cube $Q\subset\mathbb{R}^n$, let $\mathcal{D}(Q)$ be the set of cubes obtained by repeatedly subdividing $Q$ and its descendants into $2^n$ congruent subcubes.
\begin{definition}
A collection of cubes $\mathcal{D}$ is called a dyadic lattice if it satisfies the following properties:\\
$(1)$ if $Q\in\mathcal{D}$, then every child of $Q$ is also in $\mathcal{D}$;\\
$(2)$ for every two cubes $Q_1, Q_2\in\mathcal{D}$, there is a common ancestor $Q\in\mathcal{D}$ such that $Q_1, Q_2\in\mathcal{D}(Q)$;\\
$(3)$ for any compact set $K\subset\mathbb{R}^n$, there is a cube $Q\in\mathcal{D}$ such that $K\subset Q$.
\end{definition}

\begin{definition}
A subset $\mathcal{S}\subset\mathcal{D}$ is called an $\eta$-sparse family with $\eta\in(0,1)$ if for every cube $Q\in\mathcal{S}$, there is a measurable subset $E_Q\subset Q$ such that $\eta|Q|\leq|E_Q|$, and the sets $\{E_Q\}_{Q\in\mathcal{S}}$ are mutually disjoint.
\end{definition}

Let $\mathcal S$ be a sparse family. Define the sparse operator $\mathcal{T}_{\mathcal{S}}$ by
\begin{align}\label{e-spar fam defn}
\mathcal{T}_{\mathcal S}f(x):=\sum_{Q\in \mathcal S}\langle|f|\rangle_Q\chi_Q(x).
\end{align}
In \cite{Ler,LOR}, the authors  proved the following strong and weak type bounds for sparse operators: for any operator $\mathcal{T}_{\mathcal{S}}$ as in \eqref{e-spar fam defn} and $\omega\in A_p$ with $1<p<\infty$,
\begin{equation}\label{eq 2.1}
\|\mathcal{T}_{\mathcal{S}}f\|_{L^p(\omega)}\lesssim[\omega]_{A_p}^
{\max\{1,\frac{1}{p-1}\}}\|f\|_{L^p(\omega)},
\end{equation}
and  for $\omega\in A_1$ and $\alpha>0$,
\begin{equation}\label{eq 2.3}
\alpha\omega(\{x\in\mathbb{ R}^n:\mathcal{T}_{\mathcal S}f(x)>\alpha\})\lesssim[\omega]_{A_1}
\log(e+[\omega]_{A_\infty})\|f\|_{L^1(\omega)}.
\end{equation}

We now give the following pointwise estimate of variation operators in terms of sparse operators.
\begin{lemma}\label{prop2.5}
Let $\rho>2$. Then for each $f\in L_c^\infty(\mathbb{R}^n)$, there exist $3^n$ dyadic lattices $\mathcal{D}^j$ and sparse families $\mathcal{S}_j\subset\mathcal{D}^j$ such that for a.e. $x\in\mathbb{R}^n$,
$$\mathcal{V}_\rho(\Phi\star f)(x)\lesssim\sum_{j=1}^{3^n}
\mathcal{T}_{\mathcal{S}_j}(f)(x).$$
\end{lemma}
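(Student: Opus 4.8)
The plan is to dominate the variation operator $\mathcal{V}_\rho(\Phi\star f)$ pointwise by a sum of (shifted) dyadic sparse operators, following the now-standard sparse domination machinery of Lerner. First I would reduce matters to establishing a local mean oscillation or, more directly, a pointwise sparse bound on a single dyadic lattice. Recall the $3^n$-lattice trick: there exist $3^n$ dyadic lattices $\mathcal{D}^j$ so that every ball $B\subset\mathbb{R}^n$ is contained in some cube $Q\in\mathcal{D}^j$ with $\ell(Q)\lesssim \operatorname{diam}(B)$. Thus it suffices to produce, for a fixed arbitrary dyadic lattice $\mathcal{D}$, a sparse subfamily $\mathcal{S}\subset\mathcal{D}$ controlling the contribution of cubes of comparable scale, and then sum over the $3^n$ lattices.

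The heart of the argument is a \emph{local} estimate on an individual cube $Q_0$. Fixing $Q_0$, I would split $f=f\chi_{3Q_0}+f\chi_{(3Q_0)^c}$ and analyze the two pieces. For the far piece, because $\phi\in\mathcal S(\mathbb R^n)$ has rapidly decaying tails, the increments $\phi_{t_k}\ast(f\chi_{(3Q_0)^c})(x)-\phi_{t_{k+1}}\ast(f\chi_{(3Q_0)^c})(x)$ for $x\in Q_0$ should be controlled, after summing the $\rho$-variation, by $\langle|f|\rangle_{Q}$-type averages over dilates of $Q_0$, yielding a sparse-type bound directly; the Schwartz decay plays the role that the smoothness/cancellation of a Calderón–Zygmund kernel plays in the singular-integral version. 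For the local piece, the key analytic input is that $\mathcal{V}_\rho(\Phi\star\cdot)$ is of weak type $(1,1)$ and bounded on $L^p$ (which underlies the unweighted results of Liu \cite{Liu} and Crescimbeni et al. \cite{CrMacMTV} that this paper extends); this weak-$(1,1)$ boundedness lets me run Lerner's stopping-time/good-$\lambda$ selection of the sparse cubes. Concretely, I would iterate: select maximal descendants $Q'\subset Q_0$ where a local average of the truncated variation exceeds a large multiple of $\langle|f|\rangle_{3Q_0}$, use weak $(1,1)$ to guarantee $\sum_{Q'}|Q'|\le \tfrac12|Q_0|$ (hence sparseness with $E_{Q_0}=Q_0\setminus\bigcup Q'$), and recurse on each selected $Q'$. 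Summing the resulting stopping cubes across all $Q_0$ and all $3^n$ lattices gives the claimed bound $\mathcal{V}_\rho(\Phi\star f)(x)\lesssim\sum_{j=1}^{3^n}\mathcal{T}_{\mathcal{S}_j}(f)(x)$.

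The step I expect to be the main obstacle is controlling the \emph{oscillation} of the variation operator on the local piece, i.e.\ bounding the median (or local mean oscillation) of $\mathcal{V}_\rho(\Phi\star(f\chi_{3Q_0}))$ on a cube $Q$ by $\langle|f|\rangle_{3Q}$. Unlike a linear operator, $\mathcal{V}_\rho$ is sublinear and built from a supremum over sequences $\{t_k\}\downarrow 0$, so the usual "subtract the value at the center" decomposition must be handled with care: one must compare the variation computed for $x\in Q$ against that computed for a reference point, and show the difference is governed by a single average of $|f|$ over a dilate of $Q$. Here the smoothness estimate $|\phi_t(x-y)-\phi_t(x'-y)|\lesssim (|x-x'|/t)\,t^{-n}(1+|x-y|/t)^{-N}$, valid because $\phi$ is Schwartz, should furnish the requisite Hölder-type continuity in the spatial variable; combined with the elementary bound \eqref{eq 1.2} relating the sup to a fixed value plus the $\rho$-variation, this lets the oscillation estimate go through. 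Once this local oscillation bound and the weak $(1,1)$ boundedness are in hand, Lerner's formalism assembles the sparse domination essentially mechanically.
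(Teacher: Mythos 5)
Your overall architecture---Lerner's sparse-domination scheme driven by the weak $(1,1)$ boundedness of $\mathcal{V}_\rho(\Phi\star\cdot)$---is the same as the paper's: the paper invokes Lerner's abstract theorem directly and reduces everything to the weak $(1,1)$ boundedness of the grand maximal truncation operator $\mathcal{M}_{\mathcal{V}_\rho(\Phi)}f(x)=\sup_{Q\ni x}\esss_{\xi\in Q}\mathcal{V}_\rho(\Phi\star(f\chi_{\Rn\setminus 3Q}))(\xi)$, which is its Lemma \ref{lm2.4}. However, two of your key steps would fail as stated. First, the far piece cannot be bounded ``directly'' by averages. For Schwartz $\phi$ the best one-point bound is $\|\{\phi_t(x-y)\}_{t>0}\|_{\mathcal{V}_\rho}\le\int_0^\infty|\partial_t\phi_t(x-y)|\,dt\lesssim|x-y|^{-n}$, with no additional decay (this is sharp, since $\sup_t|\phi_t(x-y)|\approx|x-y|^{-n}$ in general), so for $x\in Q_0$ you only obtain
$$\mathcal{V}_\rho(\Phi\star(f\chi_{(3Q_0)^c}))(x)\lesssim\sum_{j\ge1}\langle|f|\rangle_{2^jQ_0},$$
a sum with no geometric decay in $j$: taking $f=\chi_{B(0,1)}$ and $\ell(Q_0)\to0$, the right-hand side is $\approx\log(1/\ell(Q_0))$ while $Mf\approx1$. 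The decay of $\phi$ is not playing the role of Calder\'on--Zygmund smoothness here; what gains the crucial geometric factor is comparing the far-part variation at \emph{two} points $\xi,z\in Q_0$, which is exactly the paper's estimate \eqref{eq2.5}, $\|\{\phi_t(\xi-y)-\phi_t(z-y)\}_{t>0}\|_{\mathcal{V}_\rho}\lesssim|z-\xi|/|\xi-y|^{n+1}$. After that comparison one adds back $\mathcal{V}_\rho(\Phi\star(f\chi_{3Q_0}))(z)+\mathcal{V}_\rho(\Phi\star f)(z)$ by sublinearity and kills these terms at a well-chosen $z$ via Kolmogorov's inequality and the weak $(1,1)$ bound (the $M_{1/2}$ device), not via smoothness: you have the roles of the two pieces inverted, since the local piece needs only weak $(1,1)$, while the far piece needs the two-point oscillation estimate.

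Second, and more seriously, your proposed mechanism for the two-point comparison cannot work. A fixed-$t$ bound $|\phi_t(x-y)-\phi_t(x'-y)|\lesssim(|x-x'|/t)\,t^{-n}(1+|x-y|/t)^{-N}$ controls $\sup_t$ of the difference family but gives no control whatsoever on its $\rho$-variation: a family can be uniformly small in $t$ and still have arbitrarily large (even infinite) variation, e.g.\ $a_t=\varepsilon\sin(1/t)$. Moreover, inequality \eqref{eq 1.2} goes in the wrong direction---it bounds a supremum by a variation, whereas here you must bound a variation. The missing step is to dominate the $\rho$-variation by the total variation in $t$, namely $\|\{a_t\}_{t>0}\|_{\mathcal{V}_\rho}\le\int_0^\infty|\partial_t a_t|\,dt$, and then apply the mean value theorem in the spatial variable to $\partial_t\phi_t$; this is precisely how the paper derives \eqref{eq2.5}. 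With that estimate in hand, your stopping-time reconstruction of Lerner's theorem (or simply a citation of it) does assemble the sparse bound, so the gaps are fixable, but as written the proof does not go through.
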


To prove Lemma \ref{prop2.5}, we introduce the maximal function $\mathcal{M}_{\mathcal{V}_\rho(\Phi)}$ by
\begin{align*}
\mathcal{M}_{\mathcal{V}_\rho(\Phi)}f(x):=\sup_{Q\ni x}\esss_{\xi\in Q}\mathcal{V}_\rho(\Phi\star (f\chi_{\Rn\backslash3Q}))(\xi).
\end{align*}
Since $\mathcal{V}_\rho(\Phi\star f)$ is of weak type (1, 1) (see \cite[Theorem 2.6]{Liu}), then arguing as in \cite[Theorem 3.1]{Ler} (see also \cite[Remark 4.3]{Ler}), Lemma \ref{prop2.5} is an immediate consequence of that $\mathcal{M}_{\mathcal{V}_\rho(\Phi)}$ is of weak $(1,1)$. Therefore, we need only to check the following result.

\begin{lemma}\label{lm2.4}
For $\rho>2$, $\mathcal{M}_{\mathcal{V}_\rho(\Phi)}$ is bounded from $L^1(\mathbb{R}^n)$ to $L^{1,\infty}(\mathbb{R}^n)$.
\end{lemma}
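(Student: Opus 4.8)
The plan is to prove the weak $(1,1)$ bound for the grand maximal variation operator $\mathcal{M}_{\mathcal{V}_\rho(\Phi)}$ by reducing it to two ingredients: the already-known weak $(1,1)$ boundedness of the variation operator $\mathcal{V}_\rho(\Phi\star f)$ itself (cited from \cite[Theorem 2.6]{Liu}), and a pointwise comparison of $\mathcal{M}_{\mathcal{V}_\rho(\Phi)}f$ with the ordinary Hardy--Littlewood maximal function $Mf$ plus the variation operator $\mathcal{V}_\rho(\Phi\star f)$. The guiding idea is that for a fixed cube $Q\ni x$, the inner quantity $\esss_{\xi\in Q}\mathcal{V}_\rho(\Phi\star(f\chi_{\mathbb{R}^n\backslash 3Q}))(\xi)$ should, up to the single-annulus error, be controllable by $\mathcal{V}_\rho(\Phi\star f)(\xi)$ at an arbitrary reference point together with averages of $|f|$ over dilates of $Q$.

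First I would fix $x$ and a cube $Q\ni x$ with side length $\ell$, and pick any two points $\xi,\xi'\in Q$. The key estimate is to bound $\mathcal{V}_\rho(\Phi\star(f\chi_{\mathbb{R}^n\backslash3Q}))(\xi)$ by $\mathcal{V}_\rho(\Phi\star f)(\xi')$ plus a harmless term. To do this I would split the convolution $\phi_{t_k}\ast(f\chi_{\mathbb{R}^n\backslash 3Q})$ according to whether the scale $t_k$ is smaller or larger than $\ell$. For scales $t_k\gtrsim\ell$ the two base points $\xi,\xi'$ and $x$ are essentially interchangeable because $\phi\in\mathcal{S}(\mathbb{R}^n)$ is smooth with rapidly decaying derivatives, so $|\phi_t\ast g(\xi)-\phi_t\ast g(\xi')|$ is controlled by $\ell/t$ times a maximal average of $|g|$; these differences telescope and are summable against the $\rho$-variation gauge. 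For small scales $t_k\lesssim\ell$, the truncation $\chi_{\mathbb{R}^n\backslash 3Q}$ removes the local part, and the Schwartz decay of $\phi$ forces $\phi_{t_k}\ast(f\chi_{\mathbb{R}^n\backslash 3Q})(\xi)$ to be dominated by tail averages $\langle|f|\rangle$ over dilates $3^m Q$ with geometric weights, hence by $M f(\xi)$ up to an absolutely summable constant. Combining, I expect a pointwise bound of the form $\mathcal{M}_{\mathcal{V}_\rho(\Phi)}f(x)\lesssim \mathcal{V}_\rho(\Phi\star f)(x)+Mf(x)$, after taking essential infimum in $\xi'$ and supremum in $Q$.

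Granting that pointwise domination, the conclusion is immediate: both $\mathcal{V}_\rho(\Phi\star f)$ and $M$ are of weak type $(1,1)$ on $L^1(\mathbb{R}^n)$ (the former by \cite[Theorem 2.6]{Liu}, the latter classically), and weak $(1,1)$ is stable under sums, so $\mathcal{M}_{\mathcal{V}_\rho(\Phi)}$ is of weak type $(1,1)$ as well. The subtle point is that $\mathcal{M}_{\mathcal{V}_\rho(\Phi)}$ involves an essential supremum over $\xi\in Q$ of the variation of the \emph{truncated} input; one must be careful that replacing the variable base point $\xi$ by a fixed $\xi'$ (or by $x$) costs only the maximal-function error, uniformly in $Q$ and in the choice of the decreasing sequence $\{t_k\}$ realizing the supremum in the $\rho$-variation.

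The main obstacle I anticipate is the careful handling of the truncation across the dyadic scale $t_k\sim\ell(Q)$: one has to estimate, uniformly over all admissible decreasing sequences, how much the restriction $f\chi_{\mathbb{R}^n\backslash 3Q}$ distorts the $\rho$-variation compared with the full $f$. Concretely, the difference $\phi_{t_k}\ast(f\chi_{3Q})(\xi)$ for small scales, and the base-point swap for large scales, must each be shown to contribute a term that is pointwise $\lesssim Mf$ rather than merely being controlled in an averaged sense; exploiting that $\phi$ and all its derivatives decay faster than any polynomial is exactly what makes the geometric series in $m$ (over the annuli $3^{m+1}Q\setminus 3^m Q$) converge and the base-point difference integrable in the scale parameter. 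Once this single-annulus smoothing/tail analysis is in place, assembling the weak-type bound is routine.
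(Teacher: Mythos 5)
Your route is genuinely different from the paper's, and once one step is repaired (see below) it works --- in fact it yields a stronger pointwise bound. The paper compares $\mathcal{V}_\rho(\Phi\star(f\chi_{\mathbb{R}^n\backslash3Q}))(\xi)$ with the variation at an auxiliary point $z\in Q$ of both the local piece $f\chi_{3Q}$ and the full $f$, then removes the $z$-dependence by taking an infimum over $z$ and applying the Kolmogorov inequality; this costs two extra ingredients: the weak $(1,1)$ bound of $\mathcal{V}_\rho(\Phi\star\cdot)$ from \cite{Liu} applied to $f\chi_{3Q}$, and the operator $M_{1/2}g:=(M(|g|^{1/2}))^2$ together with its $L^{1,\infty}$-boundedness from \cite{NTV}, since the paper's pointwise conclusion is $\mathcal{M}_{\mathcal{V}_\rho(\Phi)}f\lesssim Mf+M_{1/2}(\mathcal{V}_\rho(\Phi\star f))$. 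Your scale splitting at $t\sim\ell(Q)$ instead gives $\mathcal{M}_{\mathcal{V}_\rho(\Phi)}f(x)\lesssim Mf(x)+\mathcal{V}_\rho(\Phi\star f)(x)$ with the variation evaluated at $x$ itself (you may simply take $\xi'=x$; no essential infimum is needed), because the base-point swap is performed only at scales $t\ge\ell(Q)$, where it is harmless even for the full $f$, while the small-scale part of the truncated function and the large-scale part of the local piece are dominated by $Mf(x)$ outright. Then Liu's weak $(1,1)$ estimate enters only once, at the very end, and neither Kolmogorov nor \cite{NTV} is needed; this is a cleaner reduction than the paper's.

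However, the mechanism you describe for controlling the variation pieces is not valid as stated, and it is the crux of the argument. In two places you bound \emph{individual} terms --- $|\phi_{t_k}\ast(f\chi_{\mathbb{R}^n\backslash3Q})(\xi)|\lesssim Mf$ at small scales, and $|\phi_t\ast g(\xi)-\phi_t\ast g(\xi')|\lesssim(\ell/t)\,Mg$ at large scales --- and then appeal to ``telescoping'' or ``absolute summability''. Size bounds on the individual $a_{t_k}$, however fast they decay in $t$, never control $\sum_k|a_{t_k}-a_{t_{k+1}}|^\rho$: the supremum runs over arbitrary decreasing sequences, which may cluster at a single scale (e.g.\ $t_k=\ell(1+1/k)$), so no geometric series materializes. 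What saves your argument is that in every piece one can dominate the $\rho$-variation by the total variation in $t$ via Minkowski's integral inequality and the fundamental theorem of calculus, $\|\{b_t\}_{t\in I}\|_{\mathcal{V}_\rho}\le\int_I|\partial_t b_t|\,dt$, combined with the kernel bounds $|\partial_t\phi_t(z)|\lesssim t^{-n-1}(1+|z|/t)^{-n-2}$ and $|\nabla\partial_t\phi_t(z)|\lesssim t^{-n-2}(1+|z|/t)^{-n-2}$; this is precisely the mechanism of the paper's estimate \eqref{eq2.5}, restricted here to $t\le\ell$ or $t\ge\ell$ as appropriate. Concretely, $\int_0^\ell|\partial_t\phi_t(\xi-y)|\,dt\lesssim\ell^2|\xi-y|^{-n-2}$ for $y\notin3Q$ handles the small scales; $\int_\ell^\infty|\partial_t\phi_t(\xi-y)|\,dt\lesssim\ell^{-n}$ handles the local piece $f\chi_{3Q}$ at large scales; and $\int_\ell^\infty\big|\partial_t\big[\phi_t\ast f(\xi)-\phi_t\ast f(x)\big]\big|\,dt\lesssim\int_\ell^\infty\ell\, t^{-2}\,dt\;Mf(x)\lesssim Mf(x)$ handles the base-point swap. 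With these substitutions in place of the individual-term bounds, your proof is complete.
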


\begin{proof}
For any $f\in L^1(\mathbb R^n)$ and $x\in \mathbb R^n$, we first show that for any cube $Q\ni x$,
\begin{align}\label{e-var ptw esti}
\mathcal{V}_\rho(\Phi\star (f\chi_{\mathbb{R}^n\backslash3Q}))(\xi)\lesssim  Mf(x)+M_{1/2}(\mathcal{V}_\rho(\Phi\star f))(x)\quad \text{for a.e.}\,\, \xi\in Q,
\end{align}
where $M_{1/2}(f):=(M(|f|^{1/2}))^2$, and
the implicit constant is independent of $f$, $x$ and $\xi$.

Indeed, for any cube $Q\ni x$, a.e. $\xi\in Q$ and $z\in Q$, we write
\begin{align*}
\mathcal{V}_\rho(\Phi\star (f\chi_{\mathbb{R}^n\backslash3Q}))(\xi)
&\le\sup_{\{t_k\}\downarrow0}\Big(\sum_k\Big|\int_{\mathbb{R}^n\backslash3Q}
[(\phi_{t_k}(\xi-y)-\phi_{t_{k+1}}(\xi-y))\\
&\qquad-(\phi_{t_k}(z-y)-\phi_{t_{k+1}}(z-y))]f(y)dy\Big|
^\rho\Big)^{1/\rho}\\
&\qquad+\mathcal{V}_\rho(\Phi\star (f\chi_{3Q}))(z)+\mathcal{V}_\rho(\Phi\star f)(z)\\
&=:J(\xi,z)+\mathcal{V}_\rho(\Phi\star (f\chi_{3Q}))(z)+\mathcal{V}_\rho(\Phi\star f)(z).
\end{align*}
By the Minkowski inequality and mean value theorem, we see that for given $\xi,z\in Q$ and $y\in\Rn\backslash3Q$,
\begin{align}\label{eq2.5}
\|\{\phi_t(\xi-y)-\phi_t(z-y)\}_{t>0}\|_{\mathcal{V}_\rho}
&\leq \int_{0}^\infty\Big|\frac{\partial}{\partial t}(\phi_t(\xi-y)-\phi_t(z-y))\Big|dt\\
&\lesssim |z-\xi|\int_{0}^{\infty}t^{-n-2}(1+|\xi-y|/t)^{-n-2}dt\nonumber\\
\lesssim \frac{|z-\xi|}{|\xi-y|^{n+1}}.\nonumber
\end{align}
Consequently,
\begin{align*}
J(\xi,z)&\leq\int_{\mathbb{R}^n\backslash3Q}
\|\{\phi_t(\xi-y)-\phi_t(z-y)\}_{t>0}\|_{\mathcal{V}_\rho}|f(y)|dy\\
&\lesssim\int_{\mathbb{R}^n\backslash3Q}\frac{|z-\xi|}{|z-y|^{n+1}}|f(y)|dy\lesssim Mf(x).
\end{align*}
Then
\begin{align*}
\mathcal{V}_\rho(\Phi\star (f\chi_{\mathbb{R}^n\backslash3Q}))(\xi)
&\lesssim Mf(x)+ \inf_{z\in Q}\left[\mathcal{V}_\rho(\Phi\star (f\chi_{3Q}))(z)+\mathcal{V}_\rho(\Phi\star f)(z)\right]\\
&\lesssim Mf(x)+\Big[\frac{1}{|Q|}\int_Q\mathcal{V}_\rho(\Phi\star (f\chi_{3Q}))(z)^{1/2}dz\Big]^{2}\\
&\qquad+ \Big[\frac{1}{|Q|}\int_Q\mathcal{V}_\rho(\Phi\star f)(z)^{1/2}dz\Big]^{2}\\
&\lesssim Mf(x)+\frac{1}{|Q|}\int_{3Q}|f(y)|dy +M_{1/2}(\mathcal{V}_\rho(\Phi\star f))(x)\\
&\lesssim Mf(x)+M_{1/2}(\mathcal{V}_\rho(\Phi\star f))(x),
\end{align*}
where in the last-to-second inequality, we used the Kolmogorov inequality and the weak type $(1,1)$ of $\mathcal{V}_\rho(\Phi\star f)$~(see \cite{Liu}). Therefore, \eqref{e-var ptw esti} holds.

Recall that $M_{1/2}(f)$ is bounded on $L^{1,\infty}(\mathbb R^n)$ (see \cite{NTV}). Then for $f\in L^1(\mathbb R^n)$, we have $\mathcal{V}_\rho(\Phi\star f)\in L^{1,\infty}(\mathbb R^n)$ and
\begin{align*}
\|M_{1/2}(\mathcal{V}_\rho(\Phi\star f))\|_{L^{1,\infty}(\mathbb R^n)}
\lesssim\|\mathcal{V}_\rho(\Phi\star f)\|_{L^{1,\infty}(\mathbb{R}^n)}
\lesssim\|\mathcal{V}_\rho(\Phi)\|_{L^1(\mathbb{R}^n)\rightarrow L^{1,\infty}(\mathbb{R}^n)}\|f\|_{L^1(\mathbb{R}^n)}.
\end{align*}
This, together with \eqref{e-var ptw esti} and the weak type $(1,1)$ of $M$, implies Lemma \ref{lm2.4}.
\end{proof}

By Lemma \ref{prop2.5}, \eqref{eq 2.1} and \eqref{eq 2.3}, we now obtain the following weighted estimate of variation operators
associated with approximations to the identity.

\begin{theorem}\label{th2.1}
Let $\rho>2$, $1<p<\infty$  and $\omega\in A_p$. Then for any $f\in L^p(\omega)$,
\begin{align*}
\|\mathcal{V}_\rho(\Phi\star f)\|_{L^p(\omega)}\lesssim[\omega]_{A_p}^
{\max\{1,\frac{1}{p-1}\}}\|f\|_{L^p(\omega)}.
\end{align*}
If $\omega\in A_1$, then for any $f\in L^1(\omega)$,
\begin{align*}
\|\mathcal{V}_\rho(\Phi\star f)\|_{L^{1,\infty}(\omega)}\lesssim[\omega]_{A_1}
\log(e+[\omega]_{A_\infty})\|f\|_{L^1(\omega)}.
\end{align*}
\end{theorem}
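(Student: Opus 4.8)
The plan is to derive both estimates directly from the pointwise sparse domination in Lemma \ref{prop2.5} together with the known sparse bounds \eqref{eq 2.1} and \eqref{eq 2.3}, and then to remove the a priori restriction $f\in L_c^\infty(\mathbb R^n)$ by an approximation argument. The whole structure is thus: reduce to compactly supported bounded $f$, apply the sparse inequalities term by term, and finally upgrade to all of $L^p(\omega)$ (resp. $L^1(\omega)$).

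First I would treat the strong-type estimate. Fix $1<p<\infty$, $\omega\in A_p$, and take $f\in L_c^\infty(\mathbb R^n)$. By Lemma \ref{prop2.5} there are $3^n$ sparse families $\mathcal S_j$ with $\mathcal V_\rho(\Phi\star f)(x)\lesssim\sum_{j=1}^{3^n}\mathcal T_{\mathcal S_j}(f)(x)$ for a.e.\ $x$. Taking $L^p(\omega)$ norms, using the triangle inequality, and applying \eqref{eq 2.1} to each of the finitely many sparse operators gives $\|\mathcal V_\rho(\Phi\star f)\|_{L^p(\omega)}\le\sum_{j=1}^{3^n}\|\mathcal T_{\mathcal S_j}f\|_{L^p(\omega)}\lesssim 3^n[\omega]_{A_p}^{\max\{1,1/(p-1)\}}\|f\|_{L^p(\omega)}$; since $3^n$ depends only on $n$ it is absorbed into the implicit constant, so the sharp weight dependence is retained. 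For the weak-type estimate with $\omega\in A_1$ the same domination gives, for each $\alpha>0$, the inclusion $\{\mathcal V_\rho(\Phi\star f)>\alpha\}\subseteq\bigcup_{j=1}^{3^n}\{\mathcal T_{\mathcal S_j}f>\alpha/3^n\}$; applying \eqref{eq 2.3} to each term and summing (the resulting factor $3^{2n}$ being harmless) yields the claimed bound with constant $[\omega]_{A_1}\log(e+[\omega]_{A_\infty})$.

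The remaining, and main, point is to pass from $f\in L_c^\infty$ to general $f$, since Lemma \ref{prop2.5} is stated only for compactly supported bounded functions. Here I would approximate $f$ by truncations $f_N:=f\,\chi_{\{|f|\le N\}}\chi_{B(0,N)}\in L_c^\infty(\mathbb R^n)$, which converge to $f$ in $L^p(\omega)$ and pointwise a.e.; one checks that for each fixed $t$, $\phi_t\ast f_N(x)\to\phi_t\ast f(x)$ for every $x$, the convolution being well defined on $L^p(\omega)$ by Hölder's inequality using that $\phi$ is Schwartz and $\omega^{1-p'}$ is locally integrable with polynomial growth. Because $\mathcal V_\rho$ is a supremum over finite $\ell^\rho$ sums, this pointwise convergence yields the lower-semicontinuity estimate $\mathcal V_\rho(\Phi\star f)(x)\le\liminf_N\mathcal V_\rho(\Phi\star f_N)(x)$. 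Fatou's lemma in $L^p(\omega)$ then upgrades the $L_c^\infty$ bound to all of $L^p(\omega)$; for the weak-type case the same semicontinuity, combined with the elementary fact that $\omega\big(\{\liminf_N g_N>\alpha\}\big)\le\liminf_N\omega(\{g_N>\alpha\})$, delivers the $L^{1,\infty}(\omega)$ estimate. I expect the genuine care to lie precisely in justifying this Fatou/semicontinuity step for the variation operator and the weak-type quasi-norm, whereas the sparse-domination and summation steps are essentially mechanical given the cited results.
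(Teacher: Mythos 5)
Your proposal is correct and follows essentially the same route as the paper, which obtains Theorem \ref{th2.1} directly by combining the sparse domination of Lemma \ref{prop2.5} with the known sparse bounds \eqref{eq 2.1} and \eqref{eq 2.3}. The truncation/Fatou argument you spell out to pass from $f\in L_c^\infty(\mathbb{R}^n)$ to general $f$ is a standard limiting step that the paper leaves implicit, and your treatment of it is sound.
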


\begin{remark}\label{new remark}

{\rm (i)}\, Theorem \ref{th2.1} is the quantitative weighted version of Theorem 2.6 in \cite{Liu};

{\rm (ii)} Consider the heat semigroup $\mathcal{W}:=\{e^{t\Delta}\}_{t>0}$ and the Poisson semigroup $\mathcal{P}:=\{e^{-t\sqrt{-\Delta}}\}_{t>0}$.
We remark that Theorem \ref{th2.1} holds for the variation operators associated to $\mathcal{W}$ and $\mathcal{P}$, which can be regarded as the quantitative weighted version of the corresponding results in \cite{CrMacMTV,JR}.
\end{remark}

\section{Characterization of weighted Hardy spaces}
In this section, we give the proof of Theorem \ref{theorem1.1}. Let us begin with recalling some revelent definition and lemma.
\begin{definition}\label{def3.1}
Let $\omega\in A_\infty$, $q_\omega=\inf\{q\in [1,\infty):\,\omega\in A_q\}$, and $0<p\leq1$. Then for $q\in(q_\omega,\infty]$ and $s\in\mathbb{Z}_+:=\{0, 1,2,\cdots,\}$ with $s\geq\lfloor (q_\omega/p-1)n\rfloor$, a function $a$ on $\mathbb{R}^n$ is called a $(p,q,s)_\omega$-atom if the following conditions hold:\\
$(1)$~$\supp a\subset B(x_0,r)$;\\
$(2)$~$\|a\|_{L^q(\omega)}\leq\omega(B(x_0,r))^{1/q-1/p}$;\\
$(3)$~$\int_{B(x_0,r)}a(x)x^\alpha dx=0$ for every multi-index $\alpha$ with $|\alpha|\leq s$.
\end{definition}

We now recall the following useful lemma on the boundedness criterion of an operator from $H^p(\omega)$ to $L^p(\omega)$ with $p\in(0,1]$,  established by Bownik, Li, Yang and Zhou \cite{BLYZ}.

\begin{lemma}{\rm(cf. \cite{BLYZ})}\label{lm3.2}
Let $\omega\in A_\infty$, $q_\omega=\inf\{q\in [1,\infty):\,\omega\in A_q\}$, $0<p\leq1$, and $s\in\mathbb{Z}_+$ with $s\geq\lfloor (q_\omega/p-1)n\rfloor$. Then there exists a unique bounded sublinear operator $\tilde{T}$ from $H^p(\omega)$ to $L^p(\omega)$, which extends $T$, if one of the followings holds:\\
$(1)$~$q_\omega<q<\infty$ and $T:H_{fin}^{p,q,s}(\omega)\rightarrow L^p(\omega)$ is a sublinear operator such that
\begin{align*}
\sup\{\|Ta\|_{L^p(\omega)}, a ~is ~any~ (p,q,s)_\omega-atom\}<\infty,
\end{align*}
where $H_{fin}^{p,q,s}(\omega)$ is the space of all finite linear combinations of $(p,q,s)_\omega$-atoms.\\
$(2)$~$T$ is a sublinear operator defined on continuous $(p,\infty,s)_\omega$-atoms with the property that
\begin{align*}
\sup\{\|Ta\|_{L^p(\omega)}: a~is~any~continuous~(p,\infty,s)_\omega-atom\}<\infty.
\end{align*}
\end{lemma}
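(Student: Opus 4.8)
The plan is to reduce the statement to two facts: that a uniform bound of $T$ on atoms self-improves to a bound on finite atomic combinations measured in the $H^p(\omega)$ quasi-norm, and that the space $H_{fin}^{p,q,s}(\omega)$ is dense in $H^p(\omega)$. First I would fix $q\in(q_\omega,\infty)$ as in (1) and equip $H_{fin}^{p,q,s}(\omega)$ with the finite atomic quasi-norm
\[
\|f\|_{H_{fin}^{p,q,s}(\omega)}:=\inf\Big\{\Big(\sum_{j=1}^N|\lambda_j|^p\Big)^{1/p}:\ f=\sum_{j=1}^N\lambda_ja_j,\ a_j\ \text{a}\ (p,q,s)_\omega\text{-atom}\Big\}.
\]
The decisive preliminary step is to prove that on this subspace the finite atomic quasi-norm is equivalent to $\|\cdot\|_{H^p(\omega)}$. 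The inequality $\|f\|_{H^p(\omega)}\lesssim\|f\|_{H_{fin}^{p,q,s}(\omega)}$ follows from the $L^p(\omega)$-control of the grand maximal function of a single atom together with $p$-subadditivity; the reverse inequality $\|f\|_{H_{fin}^{p,q,s}(\omega)}\lesssim\|f\|_{H^p(\omega)}$ is the substantive one, and I would obtain it by running the weighted Calder\'on--Zygmund decomposition of $f$ at dyadic heights $2^k$ of its grand maximal function, checking that because $f$ is bounded with compact support the resulting atomic series truncates to a genuinely \emph{finite} combination whose coefficients are dominated by $\|f\|_{H^p(\omega)}$.

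With this equivalence in hand, part (1) is nearly formal. For any $f=\sum_{j=1}^N\lambda_ja_j\in H_{fin}^{p,q,s}(\omega)$, sublinearity of $T$ and the hypothesis $A:=\sup_a\|Ta\|_{L^p(\omega)}<\infty$ give
\[
\|Tf\|_{L^p(\omega)}^p\le\sum_{j=1}^N|\lambda_j|^p\,\|Ta_j\|_{L^p(\omega)}^p\le A^p\sum_{j=1}^N|\lambda_j|^p.
\]
Taking the infimum over all finite atomic representations of $f$ and invoking the norm equivalence yields $\|Tf\|_{L^p(\omega)}\lesssim\|f\|_{H^p(\omega)}$ on $H_{fin}^{p,q,s}(\omega)$. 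Since this subspace is dense in $H^p(\omega)$, $T$ extends uniquely to a bounded sublinear operator $\tilde T$ on $H^p(\omega)$, the extension being forced by density together with the uniform continuity estimate just proved.

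For part (2) the same scheme applies, but with $q=\infty$ the norm equivalence above can fail on arbitrary finite $(p,\infty,s)_\omega$-combinations, which is precisely why the hypothesis is imposed only on \emph{continuous} atoms. The remedy is to establish the equivalence $\|f\|_{H_{fin}^{p,\infty,s}(\omega)}\sim\|f\|_{H^p(\omega)}$ on the restricted subspace of continuous finite combinations, and to show, by mollifying atoms, that continuous finite combinations of $(p,\infty,s)_\omega$-atoms are dense in $H^p(\omega)$. Granting these two points, the formal argument of the previous paragraph runs verbatim over continuous atoms and delivers the bounded extension $\tilde T$.

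I expect the main obstacle to be the reverse inequality $\|f\|_{H_{fin}^{p,q,s}(\omega)}\lesssim\|f\|_{H^p(\omega)}$ in the finite-atom setting: one must manufacture an atomic decomposition of $f$ that is honestly finite rather than merely $H^p(\omega)$-convergent, while keeping each atom in the prescribed class and controlling the coefficients by the grand maximal function. The weight enters through the moment condition $s\ge\lfloor(q_\omega/p-1)n\rfloor$, which supplies enough vanishing moments for the atoms to absorb the weighted size estimates for $\omega\in A_\infty$; and the $q=\infty$ endpoint must be treated separately exactly because this finiteness-with-control construction degenerates there, forcing the passage through continuous atoms and a density argument.
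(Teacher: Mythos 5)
Your high-level architecture is the right one, and it is in fact the one used in the source this lemma is quoted from (the paper itself gives no proof; Lemma \ref{lm3.2} is cited from \cite{BLYZ}, whose argument follows the Meda--Sj\"ogren--Vallarino scheme): (a) prove that the finite atomic quasi-norm is equivalent to $\|\cdot\|_{H^p(\omega)}$ on $H_{fin}^{p,q,s}(\omega)$ when $q_\omega<q<\infty$, and on \emph{continuous} finite combinations when $q=\infty$; (b) deduce boundedness of $T$ on the finite subspace from the uniform atom bound by $p$-subadditivity; (c) extend by density, using the pointwise estimate $\bigl||Tf|-|Tg|\bigr|\le|T(f-g)|$ to get the Cauchy property for sublinear $T$. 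Your diagnosis that the $q=\infty$ equivalence genuinely fails on arbitrary finite combinations of $(p,\infty,s)_\omega$-atoms, so that continuity is not a technical convenience but a necessity, is also correct.

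The genuine gap is in your mechanism for the substantive step, the reverse inequality $\|f\|_{H_{fin}^{p,q,s}(\omega)}\lesssim\|f\|_{H^p(\omega)}$. You claim that ``because $f$ is bounded with compact support the resulting atomic series truncates to a genuinely finite combination.'' This fails on two counts. First, in case (1) a finite combination of $(p,q,s)_\omega$-atoms with $q<\infty$ is in general \emph{not} bounded (atoms are normalized only in $L^q(\omega)$), so the hypothesis you lean on is unavailable precisely in the case where the equivalence is supposed to hold for all finite combinations; boundedness of $f$ is automatic exactly in the $q=\infty$ case, which is the case where the equivalence breaks down. Second, even for bounded, compactly supported, continuous $f$ the Calder\'on--Zygmund series $\sum_{k,i}h_i^k$ never truncates: the sets $\Omega_k=\{x:\,Mf(x)>2^k\}$ are nonempty for every $k$ tending to $-\infty$, and each level contributes infinitely many Whitney pieces. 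The actual proof in \cite{BLYZ} (as in Meda--Sj\"ogren--Vallarino) produces finiteness by a different device: choose $k'$ with $2^{k'}$ comparable to the decay bound of $Mf$ off a fixed dilate $\tilde B$ of $\mathrm{supp}\,f$, so that $\Omega_k\subset\tilde B$ for $k\ge k'$; the low levels $k<k'$ telescope into the good function $g^{k'}$, which is supported in $\tilde B$, bounded by $C2^{k'}$, inherits the vanishing moments, and is therefore a \emph{single} atom-multiple with coefficient controlled by $\|f\|_{H^p(\omega)}$; the remaining sum over $k\ge k'$ has all its cubes in $\tilde B$, and its tail over all but finitely many indices is recombined into \emph{one} further atom of arbitrarily small coefficient. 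It is exactly here that the hypotheses bite: for $q<\infty$ the tail is controlled because the CZ series converges in $L^q(\omega)$ (dominated convergence, impossible at $q=\infty$), while for $q=\infty$ one must substitute uniform convergence, which is what continuity of $f$ provides. Without this ``telescope plus one extra atom'' argument your key step, and hence both parts of the lemma, does not go through.
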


Now, we are in the position to prove Theorem \ref{theorem1.1}.

\begin{proof}[Proof of Theorem \ref{theorem1.1}]
(i).\quad When $n/(n+1)<p\leq1$ and $\omega\in A_{p(n+1)/n}$, we first assume that $f\in H^p(\omega)$. Then by the definition of $H^p(\omega)$, we see that $\phi_t\ast f\in L^p(\omega)$ and $\|\phi_t\ast f\|_{L^p(\omega)}\le \|f\|_{H^p(\omega)}$. We now show that
\begin{equation}\label{e-var bdd Hp}
\|\mathcal{V}_\rho(\Phi\star f)\|_{L^p(\omega)}\lesssim \|f\|_{H^p(\omega)}.
\end{equation}
 Invoking Lemma \ref{lm3.2},  it suffices to verify that for some $q_0\in(q_\omega, p(n+1)/n)$ such that $w\in A_{q_0}$
and any $s\in\mathbb{ Z}_+$ with $s\geq\lfloor (q_\omega/p-1)n\rfloor$, there is a positive constant $C$ such that for any $(p,q_0,s)_\omega$-atom $a$,
\begin{equation}\label{e-var bdd atom}
\|\mathcal{V}_\rho(\Phi\star a)\|_{L^p(\omega)}\leq C.
\end{equation}

We assume that $\supp a\subset B:=B(x_0,r)$ and denote $\tilde B:=4B$.
 Then applying the H\"{o}lder inequality, Theorem \ref{th2.1} and Definition \ref{def3.1}, we have
\begin{align}\label{eq 3.1}
\int_{\tilde B}\mathcal{V}_\rho(\Phi\star a)(x)^p\omega(x)dx&\leq[\omega(\tilde B)]^{1-p/{q_0}}
\|\mathcal{V}_\rho(\Phi\star a)\|_{L^{q_0}(\omega)}^p\\
&\lesssim[\omega(B)]^{1-p/{q_0}}\|a\|_{L^{q_0}(\omega)}^{p}\lesssim1.\nonumber
\end{align}

On the other hand, using \eqref{eq2.5} and the vanishing condition of $a$, we have
\begin{align}\label{eq3.4}
\mathcal{V}_\rho(\Phi\star a)(x)
&=\sup_{\{t_k\}\downarrow0}\Big(\sum_k\Big|\int_{\mathbb{R}^n}[(\phi_{t_k}(x-y)
-\phi_{t_{k+1}}(x-y))\\
&\qquad-(\phi_{t_k}(x-x_0)-\phi_{t_{k+1}}(x-x_0))]a(y)dy\Big|
^\rho\Big)^{1/\rho}\nonumber\\
&\quad\lesssim\int_{B}|a(y)|\|\{\phi_t(x-y)-\phi_t(x-x_0)\}_{t>0}\|_{\mathcal{V}_\rho}dy\nonumber\\
&\quad\lesssim\int_B|a(y)|\frac{|y-x_0|}{|x-x_0|^{n+1}}dy,\quad\forall\,\,x\notin\tilde B.\nonumber
\end{align}
By the definition of $A_{q_0}$ and Definition \ref{def3.1}, it yields that
\begin{align*}
\Big(\int_B|a(y)|dy\Big)^p&\leq\Big(\int_B|a(y)|^{q_0}\omega(y)dy\Big)^{p/{q_0}}
\Big(\int_B\omega(y)^{-{q_0}'/{q_0}}dy\Big)^{p/{q_0}'}\\
&\lesssim\omega(B)^{p/{q_0}-1}\omega(B)^{-p/{q_0}}|B|^{p}=\omega(B)^{-1}|B|^{p}.
\end{align*}
Therefore,
\begin{align*}
\int_{\tilde B^c}\mathcal{V}_\rho(\Phi\star a)(x)^p\omega(x)dx&\lesssim\sum_{j=1}^{\infty}
\int_{2^{j+2}B\backslash2^{j+1}B}\frac{r^p}{|x-x_0|^{(n+1)p}}\omega(x)dx
\Big(\int_B|a(y)|dy\Big)^p\\
&\leq\omega(B)^{-1}|B|^{p}\sum_{j=1}^{\infty}\int_{2^{j+2}B}\frac{r^p}{(2^{j+1}r)^{(n+1)p}}\omega(x)dx
\\
&\lesssim\omega(B)^{-1}|B|^{p}\sum_{j=1}^{\infty}2^{-j(pn+p)}|B|^{-p}\omega(2^{j+2}B)
\lesssim 1.
\end{align*}
This, together with the estimate \eqref{eq 3.1}, implies \eqref{e-var bdd atom} and completes the proof of \eqref{e-var bdd Hp}.

Conversely, if $\phi_t\ast f\in L^p(\omega)$ and $\mathcal{V}_\rho(\Phi\star f)\in L^p(\omega)$, then by \eqref{eq 1.2}, $f\in H^p(\omega)$ and
$$\|f\|^p_{H^p(\omega)}=\|M_\phi f\|^p_{L^p(\omega)}\le\|\phi_t\ast f\|^p_{L^p(\omega)}+\|\mathcal{V}_\rho(\Phi\star f)\|^p_{L^p(\omega)}.$$
This finishes the proof of (i).

(ii).\quad When $1<p<\infty$ and $\omega\in A_p$, it follows from Theorem \ref{th2.1} that
\begin{align*}
\|\mathcal{V}_\rho(\Phi\star f)\|_{L^p(\omega)}\lesssim[\omega]_{A_p}^
{\max\{1,\frac{1}{p-1}\}}\|f\|_{L^p(\omega)},\qquad\forall\,\, f\in L^p(\omega).
\end{align*}
That is, (\ref{eq1.6}) holds. Moreover, if $\phi_t\ast f\in L^p(\omega)$ and $\mathcal{V}_\rho(\Phi\star f)\in L^p(\omega)$, then by \eqref{eq 1.2}, $f\in H^p(\omega)$ and
$$\|f\|_{H^p(\omega)}=\|M_\phi f\|_{L^p(\omega)}\le\|\phi_t\ast f\|_{L^p(\omega)}+\|\mathcal{V}_\rho(\Phi\star f)\|_{L^p(\omega)}.$$
In converse, if $f\in H^p(\omega)=L^p(\omega)$, then by \eqref{eq1.6} and the $L^p(\omega)$-boundedness of $M_\phi$,
$$\|\phi_t\ast f\|_{L^p(\omega)}+\|\mathcal{V}_\rho(\Phi\star f)\|_{L^p(\omega)}\lesssim\|M_\phi f\|_{L^p(\omega)}+\|f\|_{L^p(\omega)}
\lesssim\|f\|_{L^p(\omega)}\sim\|f\|_{H^p(\omega)}.$$
This completes the proof of (ii). Theorem \ref{theorem1.1} is proved.
\end{proof}

\section{The characterization of $BMO_\nu(\mathbb{R}^n)$}

This section is concerning with the proof of Theorem \ref{new theorem1}.
We first recall the following relevant notation and the equivalent definition of $BMO_\nu(\mathbb{R}^n)$.

\begin{definition} {\rm (cf. \cite{LOR1})}
By a median value of a real-valued measurable function $f$ over a measure set $E$ of positive finite measure, we mean a possibly non-unique, real number $m_f(E)$ such that
$$\max(|\{x\in E: f(x)>m_f(E)\}|,\,\,|\{x\in E: f(x)<m_f(E)\}|)\leq|E|/2.$$
\end{definition}

In order to introduce the equivalent definition of $BMO_\nu(\mathbb{R}^n)$, we recall the definition of local mean oscillation.
\begin{definition} {\rm (cf. \cite{LOR1})}
For a complex-valued measurable function $f$, we define the local mean oscillation of $f$ over a cube $Q$ by
\begin{equation*}
a_{\tau}(f;Q):=\inf_{c\in \mathbb{C}}((f-c)\chi_Q)^*(\tau|Q|)\hspace{6mm}(0<\tau<1),
\end{equation*}
 where $f^\ast$ denotes the non-increasing rearrangement of $f$.
\end{definition}

For $\tau\in (0,\frac{1}{2^{n+2}}]$,
the following equivalent relation is valid:
\begin{equation}\label{eq4.1}
\|f\|_{BMO_\nu(\mathbb{R}^n)}\sim \sup_Q \frac{|Q|}{\nu(Q)}a_{\tau}(f;Q).
\end{equation}
We refer readers to \cite[Lemma 2.1]{LOR1} for more details.

Now we prove Theorem \ref{new theorem1}.

\begin{proof}[Proof of Theorem \ref{new theorem1}]
We first show that $(1)\Rightarrow(2)$. Without loss of generality, {{we assume that $b$ and $\phi$ are real-valued}},
and $\phi(z)\geq 1$ for $z\in B(z_0,\delta)$, where $|z_0|=1$ and $\delta>0$ is a small constant. For any cube $Q$, denote by
\begin{align*}
P:=Q-10\sqrt{n}\delta^{-1}l_Qz_0
\end{align*}
the cube associated with $Q$. For $0<\tau<1$, by the definition of $a_{\tau}(f;Q)$, there exists a subset $\tilde Q$ of $Q$, such that $|\tilde Q|=\tau|Q|$ and for any $x\in \widetilde Q$,
\begin{align*}
a_{\tau}(b;Q)\leq |b(x)-m_b(P)|.
\end{align*}
Then, by the definition of $m_b(P)$, there exist subsets $E\subset \tilde Q$ and $F\subset P$ such that
\begin{align*}
  |E|=|\tilde Q|/2=\tau|Q|/2,\ |F|=|P|/2=|Q|/2,
\end{align*}
  and
\begin{align*}
  a_{\tau}(b;Q)\leq |b(x)-b(y)|,\quad\forall\, x\in E,\, y\in F,
\end{align*}
and $b(x)-b(y)$ does not change sign in $E\times F$.
Let $$f(x):=\Big(\int_F\mu(x)dx\Big)^{-1/p}\chi_F(x).$$ Then,
\begin{align*}
\mathcal{V}_{\rho}(\Phi\star f)(x)&=
\sup_{t_{k}\downarrow 0}\Big(\sum_{k=1}^{\infty}\Big|\int_{F}(b(x)-b(y))
  (\phi_{t_k}(x-y)-\phi_{t_{k+1}}(x-y))dy\Big|^{\rho}\Big)
  ^{1/\rho}\\
&\qquad\times\Big(\int_F\mu(x)dx\Big)^{-1/p}\\
&\geq\varliminf_{t\rightarrow 0}\Big|\int_{F}(b(x)-b(y))(\phi_{10\sqrt{n}\delta^{-1}l_Q}(x-y)-\phi_{t}(x-y))dy\Big|\\
&\qquad\times\Big(\int_F\mu(x)dx\Big)^{-1/p}.
\end{align*}
For $x\in E\subset Q, y\in F\subset P$, we have
\begin{align*}
x-y\in 2l_QQ_0+10\sqrt{n}\delta^{-1}l_Qz_0\subset (l_QQ_0)^c,\ \ \ \frac{x-y}{10\sqrt{n}\delta^{-1}l_Q}\in \frac{\delta}{5\sqrt{n}}Q_0+z_0\subset B(z_0,\delta),
\end{align*}
where $Q_0$ is the cube centered at origin with side length 1. From this, for $x\in E$, $y\in F$, we have the following estimates
\begin{align*}
  \phi_{10\sqrt{n}\delta^{-1}l_Q}(x-y)
  \gtrsim \frac{1}{|Q|}\phi\Big(\frac{x-y}{10\sqrt{n}\delta^{-1}l_Q}\Big)\geq\frac{1}{|Q|},
\end{align*}
and
\begin{align*}
  \lim_{t\rightarrow 0}\left|\phi_{t}(x-y))\right|
  =
  \lim_{t\rightarrow 0}\frac{1}{t^n}\left|\phi\Big(\frac{x-y}{t}\Big)\right|\lesssim \lim_{t\rightarrow 0}\frac{1}{t^n}\Big(\frac{|x-y|}{t}\Big)^{-n-1}
  \lesssim \lim_{t\rightarrow 0}t(|x-y|)^{-n-1}=0.
\end{align*}
Hence, for $x\in E$,
\begin{align*}
\mathcal{V}_{\rho}(\Phi\star f)(x)
&\ge\varliminf_{t\rightarrow 0}\int_{F}|b(x)-b(y)||\phi_{10\sqrt{n}\delta^{-1}l_Q}(x-y)-\phi_{t}(x-y)|dy
\Big(\int_F\mu(x)dx\Big)^{-1/p}\\
&\geq \int_{F}|b(x)-b(y)|\varliminf_{t\rightarrow 0}|\phi_{10\sqrt{n}\delta^{-1}l_Q}(x-y)-\phi_{t}(x-y)|dy\Big(\int_F\mu(x)dx\Big)^{-1/p}\\
&=\int_{F}|b(x)-b(y)||\phi_{10\sqrt{n}\delta^{-1}l_Q}(x-y)|dy\Big(\int_F\mu(x)dx\Big)^{-1/p}\\
&\gtrsim a_{\tau}(b;Q)
\Big(\int_F\mu(x)dx\Big)^{-1/p},
\end{align*}
which yields that
\begin{align}\label{eq4.4}
\int_E\mathcal{V}_{\rho}(\Phi\ast f)(x)dx\gtrsim\tau|Q|a_{\tau}(b;Q)
\Big(\int_P\mu(x)dx\Big)^{-1/p}.
\end{align}

On the other hand, by the H\"{o}lder inequality and Theorem \ref{th2.1}, we have
\begin{align*}
\int_E\mathcal{V}_{\rho}(\Phi\star f)(x)dx&\leq\Big(\int_E\mathcal{V}_{\rho}(\Phi\star f)(x)^p\lambda(x)dx\Big)^{1/p}\Big(\int_Q\lambda(x)^{-p'/p}dx\Big)^{1/p'}\\
&\lesssim\Big(\int_Q\lambda(x)^{-p'/p}dx\Big)^{1/p'}.
\end{align*}
This, together with \eqref{eq4.4} and $P\subset KQ$ for some $K>0$, gives that
\begin{align}\label{eq4.5}
a_{\tau}(b;Q)&\lesssim\Big(\frac{1}{|Q|}\int_Q\mu(x)dx\Big)^{1/p}
\Big(\frac{1}{|Q|}\int_Q\lambda(x)^{-p'/p}dx\Big)^{1/p'}.
\end{align}
Noting that
$$\frac{1}{|Q|}\int_Q\mu(x)dx\lesssim\Big(\frac{1}{|Q|}\int_Q\mu(x)^{1/(p+1)}dx\Big)^{p+1}$$
(see \cite{LOR1}), using H\"{o}lder's inequality and $\mu=\nu^p\lambda$, we obtain
$$\Big(\frac{1}{|Q|}\int_Q\mu(x)^{1/(p+1)}dx\Big)^{p+1}\leq\Big(\frac{1}{|Q|}\int_Q\nu(x)dx\Big)^p
\Big(\frac{1}{|Q|}\int_Q\lambda(x)dx\Big).$$
Thus, by \eqref{eq4.5} and $\lambda\in A_p$, we conclude that
\begin{align*}
a_{\tau}(b;Q)&\lesssim\Big(\frac{1}{|Q|}\int_Q\nu(x)dx\Big)
\Big(\frac{1}{|Q|}\int_Q\lambda(x)dx\Big)^{1/p}
\Big(\frac{1}{|Q|}\int_Q\lambda(x)^{-p'/p}dx\Big)^{1/p'}
\lesssim\frac{1}{|Q|}\int_Q\nu(x)dx.
\end{align*}
This implies that $b\in BMO_\nu(\Rn)$ by choosing $\tau=1/2^{n+2}$ and invoking \eqref{eq4.1}.

Next, we show that $(2)\Rightarrow (1)$. Indeed, using Lemma \ref{lm2.4}, following the standard steps of \cite{LOR}, there exist $3^n$ sparse families $\mathcal{S}_j$ such that
\begin{equation}\label{eq4.4-1}
\mathcal{V}_\rho((\Phi\star f)_b)(x)\lesssim \sum_{j=1}^{3^n}(\mathcal{T}_{\mathcal{S}_j,b}(f)(x)+\mathcal{T}_{\mathcal{S}_j,b}^\ast
(f)(x)),
\end{equation}
where $$\mathcal{T}_{\mathcal{S},b}f(x):=\sum_{Q\in\mathcal{S}}|b(x)-\langle b\rangle_Q|\langle |f|\rangle_Q\chi_Q(x),\quad\mathcal{T}_{\mathcal{S},b}^\ast f(x):=\sum_{Q\in\mathcal{S}}\langle |(b-\langle b\rangle_Q)f|\rangle_Q\chi_Q(x).$$
In \cite{LOR}, the authors proved that
\begin{equation*}
\|\mathcal{T}_{\mathcal{S},b}f+\mathcal{T}_{\mathcal{S},b}^\ast\|_{L^p(\lambda)}\lesssim
([\mu]_{A_p}[\lambda]_{A_p})^{\max\{1,\frac{1}{p-1}\}}\|b\|_{BMO_\nu(\Rn)}\|f\|_{L^p(\mu)},
\end{equation*}
where $\mu,\lambda\in A_p~(1<p<\infty)$, $\nu=(\mu\lambda^{-1})^{1/p}$ and $b\in BMO_\nu(\Rn)$. This, together with \eqref{eq4.4-1}, shows that $(2)$ implies $(1)$.
 Theorem \ref{new theorem1} is proved.
\end{proof}

\section{The characterization of $\mathcal{BMO}_\omega(\Rn)$ spaces}

This section is devoted to the proof of Theorem \ref{new theorem}. We first recall and establish two lemmas.

\begin{lemma}{\rm(cf. \cite{LKY})}\label{new lemma2}
Let $\tilde{\phi}\in \mathcal{S}(\mathbb{R}^n)$ such that $\tilde{\phi}(x)=1$ for all $x\in B(0,1)$, and $M_{\tilde {\phi}}$ be defined as in
(\ref{eq1.5}). Suppose that $f$ is a measurable function such that $\supp f\subset B:=B(x_B,r)$ with some $x_B\in \mathbb{R}^n$ and $r\in(0,\infty)$. Then for all $x\not\in B$,
\begin{align*}
\frac{1}{|x-x_B|^n}\Big|\int_{B}f(y)dy\Big|\lesssim M_{\tilde{\phi}}(f)(x).
\end{align*}
\end{lemma}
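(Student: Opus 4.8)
The plan is to prove the pointwise bound
\begin{align*}
\frac{1}{|x-x_B|^n}\Big|\int_{B}f(y)dy\Big|\lesssim M_{\tilde{\phi}}(f)(x)
\qquad (x\notin B)
\end{align*}
by exhibiting a single well-chosen scale $t=t(x)$ at which the convolution $\tilde\phi_t\ast f(x)$ reproduces, up to a harmless constant, the averaged integral $\frac{1}{|x-x_B|^n}\int_B f$. Since $M_{\tilde\phi}(f)(x)=\sup_{t>0}|\tilde\phi_t\ast f(x)|$ is a supremum over all $t$, it suffices to find one admissible $t$ for which $|\tilde\phi_t\ast f(x)|$ dominates the left-hand side; the supremum then finishes the job. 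This is the standard ``choose the right dilate'' strategy, and the point is that $\tilde\phi\equiv 1$ on $B(0,1)$, so if we pick $t$ large enough that the whole support ball $B$ lands inside $\{z:|x-z|/t\le 1\}$, the kernel $\tilde\phi\big((x-y)/t\big)$ is identically $1$ for $y\in B$.

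Concretely, first I would choose $t:=|x-x_B|+r$, or any comparable multiple. For $y\in B$ we have $|x-y|\le |x-x_B|+r = t$, hence $(x-y)/t\in \overline{B(0,1)}$ and $\tilde\phi\big((x-y)/t\big)=1$ (after a cosmetic enlargement of $t$ by a fixed factor to put the argument strictly inside $B(0,1)$, using $\tilde\phi(x)=1$ on the closed unit ball if one wishes to avoid boundary issues). Therefore
\begin{align*}
\tilde\phi_t\ast f(x)=\frac{1}{t^n}\int_B \tilde\phi\Big(\frac{x-y}{t}\Big)f(y)\,dy
=\frac{1}{t^n}\int_B f(y)\,dy,
\end{align*}
so that $\big|\int_B f\big| = t^n\,|\tilde\phi_t\ast f(x)|\le t^n\, M_{\tilde\phi}(f)(x)$.

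The second step is to compare $t^n$ with $|x-x_B|^n$. Because $x\notin B$, we have $|x-x_B|\ge r$, whence $t=|x-x_B|+r\le 2|x-x_B|$, and so $t^n\le 2^n |x-x_B|^n$. Dividing by $|x-x_B|^n$ yields
\begin{align*}
\frac{1}{|x-x_B|^n}\Big|\int_B f(y)\,dy\Big|\le \frac{t^n}{|x-x_B|^n}\,M_{\tilde\phi}(f)(x)\le 2^n\, M_{\tilde\phi}(f)(x),
\end{align*}
which is the claimed inequality with implicit constant $2^n$ (possibly multiplied by the fixed enlargement factor raised to the $n$-th power).

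I do not anticipate a genuine obstacle here: the estimate is elementary once the correct scale is selected, and the only mild care needed is the boundary of $B(0,1)$ versus its closure. If one insists on the strict hypothesis $\tilde\phi=1$ only on the open or closed unit ball, I would simply take $t:=2(|x-x_B|+r)$ so that $(x-y)/t$ lies well inside $B(0,1)$, which costs only a larger constant; the comparison $t\lesssim |x-x_B|$ still holds since $|x-x_B|\ge r$. The only thing to keep straight is that $M_{\tilde\phi}$ is defined with the \emph{supremum} over $t$, so producing one good value of $t$ is all that is required.
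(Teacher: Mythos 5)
Your proof is correct: choosing the single scale $t=|x-x_B|+r$, on which $\tilde\phi\big((x-y)/t\big)\equiv 1$ for $y\in B$, and then using $|x-x_B|\ge r$ to get $t\le 2|x-x_B|$, is exactly the standard argument --- note that the paper itself gives no proof of this lemma but defers to \cite{LKY}, where the argument is the same choice-of-dilate computation. Your worry about the boundary of $B(0,1)$ is unnecessary: for $y\in B(x_B,r)$ the triangle inequality gives $|x-y|<|x-x_B|+r=t$ strictly, so $(x-y)/t$ already lies in the open unit ball where $\tilde\phi=1$.
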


\begin{lemma}\label{new lemma3}
Let $\omega\in A_q$ with $q\in(1,1+1/n)$. Then for any $b\in BMO(\mathbb{R}^n)$ and $(1,q,s)_\omega$-atom $a$ with
$s\ge 0$ and $\supp a\subset B:=B(x_B,r)$, there holds that
\begin{align*}
\|(b-\langle b\rangle_B)\mathcal{V}_\rho(\Phi\star a)\|_{L^1(\omega)}\lesssim\|b\|_{BMO(\mathbb{R}^n)}.
\end{align*}
\end{lemma}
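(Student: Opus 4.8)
The plan is to split the integral $\|(b-\langle b\rangle_B)\mathcal{V}_\rho(\Phi\star a)\|_{L^1(\omega)}$ into a local part over $\tilde B:=4B$ and a tail part over $\tilde B^c$, mirroring the atomic estimates already carried out in the proof of Theorem \ref{theorem1.1}(i). For the local part I would apply H\"older's inequality with exponents adapted to $q$, pulling the factor $b-\langle b\rangle_B$ into an $L^{q'}$-type norm and $\mathcal{V}_\rho(\Phi\star a)$ into an $L^q(\omega)$-type norm. The key inputs here are Theorem \ref{th2.1}, which gives $\|\mathcal{V}_\rho(\Phi\star a)\|_{L^q(\omega)}\lesssim \|a\|_{L^q(\omega)}\lesssim \omega(B)^{1/q-1}$ from the atom normalization in Definition \ref{def3.1}, together with the John--Nirenberg inequality to control the oscillation of $b$ over $\tilde B$ in terms of $\|b\|_{BMO(\mathbb{R}^n)}$. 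One must be slightly careful that John--Nirenberg is being applied against the weighted measure, which is legitimate since $\omega\in A_q$ is a doubling measure and BMO self-improves in any doubling setting; this produces the bound $\lesssim \|b\|_{BMO(\mathbb{R}^n)}$ for the local contribution.

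For the tail part I would reuse the pointwise bound
\begin{align*}
\mathcal{V}_\rho(\Phi\star a)(x)\lesssim \int_B|a(y)|\frac{|y-x_B|}{|x-x_B|^{n+1}}dy
\lesssim \frac{r}{|x-x_B|^{n+1}}\int_B|a(y)|dy,\quad x\notin \tilde B,
\end{align*}
which follows from the vanishing moment of the atom and the variation estimate \eqref{eq2.5}, exactly as in \eqref{eq3.4}. The extra factor now present is $|b(x)-\langle b\rangle_B|$, so on the annulus $2^{j+2}B\setminus 2^{j+1}B$ I would replace this by $|b(x)-\langle b\rangle_{2^{j+2}B}|+|\langle b\rangle_{2^{j+2}B}-\langle b\rangle_B|$, where the second difference grows like $j\|b\|_{BMO(\mathbb{R}^n)}$ by the standard telescoping estimate across dyadic dilates. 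Summing the geometric decay $2^{-j(n+1)}$ against this at most linear-in-$j$ growth keeps the series convergent.

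The main obstacle will be the tail estimate: unlike in Theorem \ref{theorem1.1}, the weight $\omega$ lives at the integrability exponent $p=1$ and the assumption $q\in(1,1+1/n)$ is exactly what forces the decay $|x-x_B|^{-(n+1)}$ to beat the growth of $\omega(2^{j+2}B)$. I would estimate $\int_{2^{j+2}B}|b(x)-\langle b\rangle_{2^{j+2}B}|\omega(x)dx$ via a weighted John--Nirenberg bound by $\lesssim \|b\|_{BMO(\mathbb{R}^n)}\omega(2^{j+2}B)$, and then use the $A_q$ doubling growth $\omega(2^{j+2}B)\lesssim 2^{j n q}\omega(B)$ controlled against the factor $(2^{j}r)^{-(n+1)}$ and the atom mass bound $\int_B|a|\lesssim |B|\,\omega(B)^{-1}$ derived through H\"older and the $A_q$ condition. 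The condition $q<1+1/n$ guarantees $nq<n+1$, so after collecting powers of $2^j$ the exponent is strictly negative and the sum converges, yielding the desired uniform bound $\lesssim \|b\|_{BMO(\mathbb{R}^n)}$; I would double-check that the linear-in-$j$ factor from the mean differences does not spoil this, which it does not since any fixed negative geometric rate absorbs polynomial growth.
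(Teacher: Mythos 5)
Your proposal is correct and follows essentially the same route as the paper's proof: the same splitting into $4B$ and $(4B)^c$, H\"older plus Theorem \ref{th2.1} and the weighted John--Nirenberg estimate \eqref{eq5.1} for the local part, and for the tail the pointwise bound \eqref{eq3.4}, the telescoping of dyadic averages with linear-in-$j$ growth, the atom mass bound via the $A_q$ condition, and the convergence of the series exactly from $q<1+1/n$. No substantive differences to report.
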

\begin{proof}
We prove this lemma by considering the following two parts:
\begin{align*}
I_1:=\int_{4B}|b(x)-\langle b\rangle_{B}|\mathcal{V}_\rho(\Phi\star a)\omega(x)dx,
\end{align*}
and
\begin{align*}
I_2:=\int_{(4B)^c}|b(x)-\langle b\rangle_{B}|\mathcal{V}_\rho(\Phi\star a)\omega(x)dx.
\end{align*}

Note that for any $\omega\in A_\infty$, $q\in[1,\infty)$ and $B\subset \mathbb R^n$,
\begin{align}\label{eq5.1}
\Big[\frac{1}{\omega(B)}\int_B|b(x)-\langle b\rangle_B|^q\omega(x)dx\Big]^{1/q}\lesssim\|b\|_{BMO(\Rn)}.
\end{align}
By H\"{o}lder's inequality, Theorem \ref{th2.1} and Definition \ref{def3.1}, we have
\begin{align*}
I_1&\leq\Big(\int_{4B}|b(x)-\langle b\rangle_{B}|^{q'}\omega(x)dx\Big)^{1/q'}
\Big(\int_{\mathbb{R}^n}\mathcal{V}_\rho(\Phi\star a)(x)^q\omega(x)dx\Big)^{1/q}\\
&\lesssim\Big[\Big(\int_{4B}|b(x)-\langle b\rangle_{4B}|^{q'}\omega(x)dx\Big)^{1/q'}+
\omega(4B)^{1/q'}\|b\|_{BMO(\mathbb{R}^n)}\Big]\|a\|_{L^q(\omega)}\\
&\lesssim\omega(4B)^{1/q'}\|b\|_{BMO(\mathbb{R}^n)}\omega(B)^{-1/q'}
\sim\|b\|_{BMO(\mathbb{R}^n)}.
\end{align*}

For $I_2$, noting that $\omega\in A_q$ and invoking the vanishing property of $a$, it follows from \eqref{eq3.4} and \eqref{eq5.1} that
\begin{align*}
I_2&\lesssim\int_{(4B)^c}|b(x)-\langle b\rangle_B|\int_B|a(y)|\frac{|y-x_B|}{|x-x_B|^{n+1}}dy\omega(x)dx\\
&\lesssim\int_B|a(y)|\sum_{j=2}^{\infty}\int_{2^{j+1}B\backslash 2^jB}|b(x)-\langle b\rangle_B|\frac{|y-x_B|}{|x-x_B|^{n+1}}\omega(x)dxdy\\
&\leq\Big(\int_B|a(y)|^q\omega(y)dy\Big)^{1/q}\Big(\int_B\omega(y)^{-q'/q}dy\Big)^{1/q'}\\
&\qquad\times\sum_{j=2}^{\infty}\int_{2^{j+1}B}\frac{r}{(2^jr)^{n+1}}(|b(x)-\langle b\rangle_{2^{j+1}B}|+|\langle b\rangle_{2^{j+1}B}-\langle b\rangle_B|)\omega(x)dx\\
&\lesssim\frac{|B|}{\omega(B)}\sum_{j=2}^{\infty}2^{-j(n+1)}j\frac{\omega(2^{j+1}B)}{|B|}
\|b\|_{BMO(\mathbb{R}^n)}
\lesssim\|b\|_{BMO(\mathbb{R}^n)}.
\end{align*}
Combining the estimates of $I_1$ and $I_2$, we finish the proof of Lemma \ref{new lemma3}.
\end{proof}

Now, we are in the position to prove Theorem \ref{new theorem}.
\begin{proof}[Proof of Theorem \ref{new theorem}]
First, we show that $(2)$ implies $(1)$. In view of Lemma \ref{lm3.2}, we only need to prove that for any $(1,\infty,s)$-atom $a$ with
$s\ge 0$ and $\supp a\subset B:=B(x_B,r)$, there holds that
\begin{align*}
\|\mathcal{V}_\rho((\Phi\star a)_b)\|_{L^1(\omega)}\lesssim\|b\|_{\mathcal{BMO}_\omega(\mathbb{R}^n)}.
\end{align*}

Write
\begin{align*}
\|\mathcal{V}_\rho((\Phi\star a)_b)\|_{L^1(\omega)}\leq
\|\mathcal{V}_\rho(\Phi\star((b-\langle b\rangle_B)a))\|_{L^1(\omega)}+
\|(b-\langle b\rangle_B)\mathcal{V}_\rho(\Phi\star a)\|_{L^1(\omega)}.
\end{align*}
Since $(1,\infty,s)$-atom is $(1,q,s)$-atom and $\|b\|_{BMO(\mathbb{R}^n)}\lesssim\|b\|_{\mathcal{BMO}_\omega(\mathbb{R}^n)}$, by Lemma \ref{new lemma3} and Theorem \ref{theorem1.1}, it suffices to show that $(b-\langle b\rangle_B)a\in H^1(\omega)$ with
\begin{align}\label{new equation1}
\|(b-\langle b\rangle_B)a\|_{H^1(\omega)}\lesssim\|b\|_{\mathcal{BMO}_\omega(\mathbb{R}^n)}.
\end{align}

We now show \eqref{new equation1}. For $x\not\in2B$, note that
\begin{align*}
M_\phi((b-\langle b\rangle_B)a)(x)&\leq\sup_{t>0}t^{-n}\int_B|b(y)-\langle b\rangle_B||a(y)|\Big|\phi\Big(\frac{x-y}{t}\Big)\Big|dy\\
&\lesssim\sup_{t>0}t^{-n}(1+|x-y|/t)^{-n}\int_B|b(y)-\langle b\rangle_B||a(y)|dy\\
&\lesssim\frac{1}{|x-x_B|^n}\int_B|b(y)-\langle b\rangle_B||a(y)|dy.
\end{align*}
Hence, by the definition of $\mathcal{BMO}_\omega(\mathbb{R}^n)$ and $\|a\|_{L^\infty(\Rn)}\leq\omega(B)^{-1}$, we have
\begin{align}\label{new equation2}
&\int_{(2B)^c}M_\phi((b-\langle b\rangle_B)a)(x)\omega(x)dx\\
&\qquad\quad\lesssim\frac{1}{\omega(B)}\Big(\int_{(2B)^c}\frac{\omega(x)}{|x-x_B|^n}dx\Big)
\Big(\int_B|b(y)-\langle b\rangle_B|dy\Big)\leq\|b\|_{\mathcal{BMO}_\omega(\mathbb{R}^n)}.\nonumber
\end{align}
Meanwhile, by the $L^q(\omega)$-boundedness of $M_\phi$, $\|a\|_{L^\infty(\Rn)}\leq\omega(B)^{-1}$ and \eqref{eq5.1}, we obtain
\begin{align*}
\int_{2B}M_\phi((b-\langle b\rangle_B)a)(x)\omega(x)dx
&\leq\Big[\int_{\mathbb{R}^n}M_\phi((b-\langle b\rangle_B)a)(x)^q\omega(x)dx\Big]^{1/q}
\Big(\int_{2B}\omega(x)dx\Big)^{1/q'}\\
&\lesssim\|(b-\langle b\rangle_B)a\|_{L^q(\omega)}\omega(2B)^{1/q'}\\
&\lesssim\omega(B)^{-1/q}\Big(\int_B|b(x)-\langle b\rangle_B|^q\omega(x)dx\Big)^{1/q}\\
&\lesssim\|b\|_{BMO(\mathbb{R}^n)}\lesssim\|b\|_{\mathcal{BMO}_\omega(\mathbb{R}^n)}.
\end{align*}
This, together with \eqref{new equation2}, implies \eqref{new equation1}, and proves that $(2) \Rightarrow (1)$.

Next, we show that $(1)\Rightarrow(2)$. For any ball $B:=B(x_B,r)$, take $h:={\rm sgn}(b-\langle b\rangle_B)$ and
\begin{align*}
a:=\frac{1}{2\omega(B)}(h-\langle h\rangle_B)\chi_B.
\end{align*}
Then $\supp a\subset B$, $\|a\|_{L^\infty(\Rn)}\leq\omega(B)^{-1}$ and $\int_Ba(y)dy=0.$ By Lemma \ref{new lemma3} and assumption $(1)$ in Theorem \ref{new theorem}, we obtain
\begin{align*}
\|\mathcal{V}_\rho(\Phi\star((b-\langle b\rangle_B)a))\|_{L^1(\omega)}&\leq\|\mathcal{V}_\rho((\Phi\star a)_b)\|_{L^1(\omega)}+
\|(b-\langle b\rangle_B)\mathcal{V}_\rho(\Phi\star a)\|_{L^1(\omega)}\\
&\lesssim\|a\|_{H^1(\omega)}+\|b\|_{BMO(\mathbb{R}^n)}.
\end{align*}
Hence, by (i) of Theorem \ref{theorem1.1} with $\lim\limits_{t\rightarrow0}\phi_t\ast f=f$ on $L^1(\omega)$ for $\omega\in A_1$ (see \cite{MuW}) and \eqref{eq5.1},
\begin{align*}
\|(b-\langle b\rangle_B)a\|_{H^1(\omega)}
&\leq\|(b-\langle b\rangle_B)a\|_{L^1(\omega)}+\|\mathcal{V}_\rho(\Phi\star((b-\langle b\rangle_B)a))\|_{L^1(\omega)}\\
&\lesssim\frac{1}{\omega(B)}\int_B|b(x)-\langle b\rangle_B|\omega(x)dx+\|a\|_{H^1(\omega)}+\|b\|_{BMO(\mathbb{R}^n)}\\
&\lesssim\|b\|_{BMO(\mathbb{R}^n)}+\|a\|_{H^1(\omega)}+\|b\|_{BMO(\mathbb{R}^n)}\lesssim 1.
\end{align*}
Also, invoking Lemma \ref{new lemma2}, for any $x\not\in B$, we have
\begin{align*}
&\frac{1}{2\omega(B)|x-x_B|^n}\int_B|b(y)-\langle b\rangle_B|dy
\\
&\qquad\quad=\frac{1}{|x-x_B|^n}\int_B(b(y)-\langle b\rangle_B)a(y)dy\lesssim M_{\tilde{\phi}}((b-\langle b\rangle_B)a)(x).
\end{align*}
Consequently,
\begin{align*}
\Big(\frac{1}{\omega(B)}\int_{B^c}\frac{\omega(x)}{|x-x_B|^n}dx\Big)\Big(\int_B|b(y)-\langle b\rangle_B|dy\Big)
&\lesssim\|M_{\tilde{\phi}}((b-\langle b\rangle_B)a)\|_{L^1(\omega)}\\
&\lesssim\|(b-\langle b\rangle_B)a\|_{H^1(\omega)},
\end{align*}
which implies that
\begin{align*}
\|b\|_{\mathcal{BMO}_\omega(\mathbb{R}^n)}\lesssim\sup_B\|(b-\langle b\rangle_B)a\|_{H^1(\omega)}
\lesssim 1.
\end{align*}
This finishes the proof of the implication $(1) \Rightarrow (2)$. Theorem \ref{new theorem} is proved.
\end{proof}

%\subsection*{Acknowledgements} The authors thank the referee cordially for their valuable suggestions.
%%%%%%%%%%%% References %%%%%%%%%%%%%
%%
%<Author name> is written as Initial of Given Name, and Family Name.
%<Title> is written in roman letters.
%<Journal name> should be abbreviated according to
% the MR Serials Abbreviations List of Mathematical Reviews:
% (Abbreviations of Names of Serials; http://www.ams.org/mr-database)
%For <Pages>, use en-dash "--" between page numbers.
%%

\end{document}